\theoremstyle{plain}
\newtheorem{theorem}{Theorem}[section]
\newtheorem{lemma}[theorem]{Lemma}
\newtheorem{corollary}[theorem]{Corollary}
\newtheorem*{remark*}{Remark}
\newtheorem{proposition}[theorem]{Proposition}
\theoremstyle{definition}
\def\1{\mathbf 1}
\def\EL{\mathcal{EL}}
\def\A{\mathcal A}
\def\CS{\mathcal {C}(S)}
\def\ACS{\mathcal {AC}(S)}
\def\AS{\mathcal {A}(S)}
\def\ELS{\mathcal {EL}(S)}
\def\EL0S{\mathcal {EL}_{0}(S)}
\def\DACS{\partial \mathcal {AC}(S)}
\def\DAS{\partial \mathcal {A}(S)}
\def\GS{\mathcal{G}(S)}
\title{INFINITE UNICORN PATHS AND GROMOV BOUNDARIES }
\author{WITSARUT PHO-ON}
\address{\tt  Department of Mathematics, University of Illinois at Urbana-Champaign,
	1409 West Green Street, Urbana, IL 61801
	\newline \indent http://www.math.uiuc.edu/\~{}phoon2/} \email{\tt phoon2@illinois.edu}
\thanks{The author was partially supported by the NSF grant DMS-1207183.}  
\begin{document}
\maketitle
\nocite{*}

\begin{abstract}
	We extend the notion of unicorn paths between two arcs introduced by Hensel, Przytycki and Webb to the case where we replace one arc with a geodesic asymptotic to a lamination. Using these paths, we give new proofs of the results of Klarreich and Schleimer identifying the Gromov boundaries of the curve graph and the arc graph, respectively, as spaces of laminations.
\end{abstract}
\section{INTRODUCTION}

The goal of this paper is to provide direct elementary proofs of results of Klarreich and Schleimer identifying the Gromov boundaries of the arc and curve graph $\ACS$ and the arc graph $\AS$, respectively. Our proofs use the tools developed by Hensel, Przytycki and Webb in their elementary proofs of hyperbolicity of both $\ACS$ and $\AS$\cite{A}. We begin by recalling Klarreich's Theorem \cite{F}; see also \cite{Ham} and \cite{Alex}.

\vspace{0.2 cm}
 \begin{theorem}[Klarreich] \label{main1}	There is a $Mod(S)$--equivariant homeomorphism $F \colon \ELS \to \DACS$.  Furthermore, if $\{a_n\} \in \ACS$ is a sequence converging to $F(L)$, then any Hausdorff accumulation point of $\{a_n\}$ in $\mathcal{G}(S)$ contains $L$. 
\end{theorem}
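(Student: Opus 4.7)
The plan is to construct the map $F$ using the infinite unicorn paths developed earlier in the paper, and to verify its properties by appealing to their fellow-traveling behavior. Fix a complete finite-area hyperbolic metric on $S$. Given $L \in \ELS$ and any arc $a \in \AS$, choose a geodesic ray $\rho$ asymptotic to a leaf of $L$ and form the infinite unicorn path $P(a,\rho)$. By the uniform unparametrized quasi-geodesic property established for infinite unicorn paths, $P(a,\rho)$ determines a point in $\DACS$; set $F(L)$ to be that point. Well-definedness requires showing that the boundary point is independent of $a$ and $\rho$: any two such choices give unicorn rays that stably fellow-travel, because changing $a$ or the choice of $\rho$ affects only a bounded initial segment once one applies the unicorn slimness estimates, combined with the fact that $L$ is minimal and filling.

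Next I would establish that $F$ is continuous and injective. For continuity, if $L_n \to L$ in $\ELS$, pick rays $\rho_n$ asymptotic to leaves of $L_n$ converging to $\rho$ on compact sets. Initial segments of the unicorn paths $P(a,\rho_n)$ will then agree with those of $P(a,\rho)$ for longer and longer lengths, so their endpoints converge in $\DACS$. For injectivity, if $L \neq L'$ are distinct ending laminations, minimality and filling produce an arc $b$ whose geometric intersection numbers with long subrays of leaves of $L$ and $L'$ differ unboundedly; the unicorn rays $P(a,\rho)$ and $P(a,\rho')$ then have bounded Gromov product and represent distinct points of $\DACS$.

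Surjectivity and the Hausdorff accumulation clause I would prove simultaneously. Take a sequence $\{a_n\}$ converging to $\xi \in \DACS$; after passing to a subsequence it converges in Hausdorff topology to some $\Lambda \in \mathcal{G}(S)$. Unicorn paths between pairs $a_i, a_j$ for large $i,j$ uniformly fellow-travel a quasi-geodesic ray representing $\xi$, and escape to $\DACS$ forces the intersection numbers of the vertices with any fixed transverse arc to grow without bound. This compels $\Lambda$ to contain a minimal filling sublamination $L \in \ELS$. The unicorn ray $P(a,\rho)$ for $\rho$ asymptotic to a leaf of $L$ then fellow-travels $\{a_n\}$, so $F(L) = \xi$; the same argument applied to any other subsequence shows that every Hausdorff accumulation point of a sequence converging to $F(L)$ must contain $L$.

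$\Mod(S)$-equivariance is immediate from the naturality of the unicorn construction, and the fact that $F$ is a homeomorphism follows from continuity of $F$ together with continuity of $F^{-1}$ (itself a consequence of the same fellow-traveling combined with compactness of $\mathcal{G}(S)$). The hard part, I expect, will be the last step of the surjectivity argument: namely, verifying that the Hausdorff limit $\Lambda$ must contain an \emph{ending} lamination and not merely a geodesic lamination with non-filling or non-minimal parts. The key input is the combination of the quasi-geodesic property of the unicorn paths with the fact that any proper sublamination or a lamination with complementary components that are not ideal polygons would allow for a transverse arc of bounded intersection with the $a_n$, which would keep the sequence at bounded distance in $\ACS$ and contradict escape to the boundary.
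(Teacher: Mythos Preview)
For punctured $S$ your outline is essentially the paper's own proof: define $F(L)=[P(a,l)]$ for $l$ asymptotic to $L$, check well-definedness and continuity via long agreement of initial unicorn segments, injectivity via a short arc in $P(a_m^1,a_n^2)$, and surjectivity plus the Hausdorff clause by showing that if the Hausdorff limit of a boundary-convergent sequence failed to contain an ending lamination one could find an arc or curve disjoint from the filled subsurface, producing bounded Gromov products. Your identification of the ``hard part'' matches the paper's Lemma~4.3.

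The genuine gap is the closed case. Every step of your construction begins with ``any arc $a\in\AS$'', and infinite unicorn paths are built from an arc $a$ and a ray $l$ from a puncture; when $S$ is closed there are no arcs and no such $l$, so $P(a,\rho)$ is undefined and nothing in the proposal goes through. The paper does \emph{not} attempt a direct unicorn argument here. Instead it uses the Birman--Series theorem to find a point $x\in S$ missed by every simple geodesic, modifies the metric so that $S\smallsetminus x$ is complete finite-area, and observes that realizing curves by geodesics gives an isometric embedding $\CS\hookrightarrow\mathcal{C}(S\smallsetminus x)$ with $\partial\CS$ closed in $\partial\mathcal{C}(S\smallsetminus x)$. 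Applying the already-proved punctured case to $S\smallsetminus x$ yields a homeomorphism $F\colon\mathcal{EL}(S\smallsetminus x)\to\partial\mathcal{C}(S\smallsetminus x)$, and one checks that $\Omega=F^{-1}(\partial\CS)\subseteq\ELS$ is closed and $\Mod(S)$-invariant. The final input is that any $\Mod(S)$-orbit is dense in $\ELS$, forcing $\Omega=\ELS$. The paper itself remarks that a direct argument using bicorn paths (rather than unicorns) is likely possible but would require developing that theory separately; your proposal does not supply this.
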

Here $Mod(S)$ is the mapping class group of $S$, $\GS$ is the set of all geodesic laminations, and $\ELS$ is the set of all ending laminations. Recently, Schleimer proved the analogous result for $\AS$, see \cite{Saul}. To state this, we must consider a larger space of laminations $\EL0S \supseteq \ELS$. The topology on $\EL0S$ and $\ELS$ is the Thurston topology \cite{Canary}, also called the coarse Hausdorff topology in \cite{Ham}.

 \begin{theorem}[Schleimer] \label{main}	There is a $Mod(S)$--equivariant homeomorphism $F \colon \EL0S \to \DAS$.  Furthermore, if $\{a_n\} \in \AS$ is a sequence converging to $F(L_0)$, then any Hausdorff accumulation point of $\{a_n\}$ in $\mathcal{G}(S)$ contains $L_0$.
 	
 \end{theorem}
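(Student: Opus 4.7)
The plan is to construct the homeomorphism $F$ directly using the infinite unicorn paths announced in the abstract. Given an ending lamination $L_0 \in \EL0S$ and a base arc $a \in \AS$, I would define an \emph{infinite unicorn path} $P(a,L_0)$ from $a$ to $L_0$ by the analogous construction of Hensel--Przytycki--Webb, replacing the endpoint arc by a geodesic ray on $S$ asymptotic to a leaf of $L_0$. The first technical step is to prove that $P(a,L_0)$, viewed as a sequence of arcs, is an unparameterized quasi-geodesic ray in $\AS$ with constants depending only on $S$, and that any two such rays with the same $L_0$ (different base arcs or different choices of asymptotic geodesic) fellow travel uniformly. These are the analogues for unicorn rays of the ``thin polygon'' estimates from \cite{A}.

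Granted these properties, I would set $F(L_0)$ to be the endpoint in $\DAS$ of $P(a,L_0)$, which is independent of the auxiliary choices by fellow travelling. Injectivity of $F$ follows once I show that if $L_0 \neq L_0'$ in $\EL0S$, then for any common base arc $a$ the unicorn rays $P(a,L_0)$ and $P(a,L_0')$ diverge: because $L_0$ is carried by every Hausdorff accumulation of its own unicorn ray, two distinct members of $\EL0S$ cannot share endpoints. Mapping class group equivariance is immediate because the unicorn construction is natural.

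For surjectivity and the Hausdorff accumulation assertion, given any $\xi \in \DAS$ represented by a sequence $\{a_n\}$ in $\AS$, I would extract a Hausdorff accumulation point $L \in \GS$ of the $a_n$ and argue, by comparing the $a_n$ with the unicorn ray from $a_0$ toward $L$, that $L$ carries a sublamination $L_0 \in \EL0S$ for which $F(L_0) = \xi$; this yields both surjectivity and the second clause of the theorem.

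The main obstacle will be bicontinuity between the Thurston (coarse Hausdorff) topology on $\EL0S$ and the subspace topology on $\DAS$. For continuity of $F$, given $L_0^{(n)} \to L_0$ coarsely, I would argue that for each fixed $a$ any initial segment of $P(a,L_0)$ is eventually shared, up to bounded error, by $P(a,L_0^{(n)})$, because that segment depends only on intersection data of the lamination with finitely many arcs, which is continuous under Hausdorff approximation. For continuity of $F^{-1}$, I would exploit uniqueness of the minimal filling sublamination extracted in the surjectivity argument, noting that any Hausdorff subsequential limit of a convergent sequence of unicorn rays must contain the target $L_0$. The delicate point is controlling non-minimal components of Hausdorff limits; this is exactly why the target must be $\EL0S$ rather than $\ELS$.
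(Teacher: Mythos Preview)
Your outline matches the paper's approach closely: the paper constructs $F$ via infinite unicorn paths $P(a,l)$ for $l$ asymptotic to $L_0$, shows these restrict to finite unicorn paths (hence are uniformly Hausdorff-close to geodesics), proves they define boundary points independent of the choice of $l$, and establishes bijectivity and bicontinuity along the lines you describe.

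The one genuine gap is your injectivity step. Knowing that every Hausdorff accumulation point of $P(a,l)$ contains $L_0$ does \emph{not} by itself show that $P(a,l)$ and $P(a,l')$ diverge in $\DAS$ when $L_0 \neq L_0'$: two sequences can represent the same Gromov boundary point while having entirely unrelated Hausdorff limits, and at this stage of the argument you do not yet know that \emph{every} sequence converging to a given boundary point coarse-Hausdorff converges to a single lamination. In the paper that statement is Lemma~\ref{ont}, and its proof \emph{uses} injectivity in the last line, so invoking it here would be circular. The paper's Lemma~\ref{inject} instead gives a direct geometric argument: from the first intersection point of the asymptotic rays $l_1,l_2$ one builds a concrete arc $b$ which lies in $P(a_m^1,a_n^2)$ for all sufficiently large $m,n$, so that $(a_m^1 \cdot a_n^2)_a \leq d(a,b)+6+2\delta$ stays bounded. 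You will need an argument of this type rather than the Hausdorff-accumulation heuristic.
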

 The outline of this paper is as follows. In section 2, we recall some basic definitions and results about Gromov boundaries, laminations, arc and curve graphs, and arc graphs. Some definitions and results about unicorn paths are also included in this section. In section 3, we define infinite unicorn paths and the proof of Theorem \ref{main} is given. In Section 4, we provide the slight modification of the proof of Theorem \ref{main} necessary for Theorem \ref{main1}. 
 
 \vspace{0.3 cm}
 \textbf{Acknowledgments.}  I would like to thank my advisor Christopher J. Leininger for guidance, support and encouragement. I would also like to thank the referee for his/her suggestions.

\section{PRELIMINARIES}

\subsection{Gromov boundaries}Let $X$ be a $\delta$--hyperbolic geodesic metric space. Fix a base point $o$ in $X$. For $x,y \in X$, define the Gromov product \[(x\cdot y)_o =\frac{1}{2}(d(x,o)+d(y,o)-d(x,y)).\]
If $[x,y]$ is a geodesic from $x$ to $y$, then $|d(o,[x,y])-(x,y)_o|\leq 2\delta$. Given two sequences of points in $X$, $\{x_n\}$ and $\{y_n\}$, they are said to be equivalent if $\displaystyle{\liminf_{i,j\rightarrow \infty}(x_i\cdot y_j)_o = \infty}$. Denote $[\{x_n\}]$ the equivalence class of $\{x_n\}$. Define the {\em Gromov boundary} of $X$ by
\[\partial X = \{[\{x_n\}] | \liminf_{i,j \rightarrow \infty} (x_i \cdot x_j)_o =\infty \}.\]
There is a metric on $\partial X$ such that distinct points $[\{x_n\}]$ and $[\{y_n\}]$ in $\partial X$ are close if and only if $\liminf_{i,j \rightarrow \infty}  (x_i \cdot x_j)_o$ is large. See \cite{I} for more details.
\subsection{Arc and curve graph and arc graph} \label{arc}
Throughout, we let $S$ be an oriented connected hyperbolic surface of finite area with finitely many punctures. We consider proper arcs and closed curves on $S$ that are simple and essential. The {\em arc and curve graph} $\ACS$ is the graph whose vertices are isotopy classes of propers arcs and curves on $S$. Two vertices are connected by an edge in $\ACS$ if they are realized disjointly. There are two subgraphs of $\ACS$ we will consider.  The {\em curve graph} $\CS$ is the largest subgraph whose vertex set is the set of isotopy classes of curves, and the {\em arc graph} $\AS$ is the largest subgraph whose vertex set is the set of isotopy classes of arcs.  The inclusion of $\CS$ into $\ACS$ is a quasi--isometry while $\AS$ into $\ACS$ is not. See \cite{D} and \cite{Masur} for more details.

 We say that two arcs or curves are in minimal position if they intersect minimally in their isotopy classes. We always realize isotopy classes of arcs and curves by their complete geodesic representatives, which are in minimal position. Let $S^0$ be a compact subsurface of $S$ obtaining by removing small open horoball cusp neighborhoods around each puncture so that any simple complete geodesic in $S$ is contained in $S^0$ or intersects $S\smallsetminus S^0$ in rays. Whenever we parametrize a bi--infinite geodesic $l$ with one end at a puncture, we require this to have unit speed, and to have $l(-\infty,0)$ being a ray in $S\smallsetminus S^0$ with $l(0)\in \partial S^0$.

\subsection{Laminations} \label{lamination} A {\em geodesic lamination on $S$} is defined to be a closed subset of $S$ which is a disjoint union of simple complete geodesics. Let $L$ be a geodesic lamination. We say {\em$L$ fills a subsurface $Y$ of $S$} if $L\subseteq Y$ and every simple closed geodesic on $Y$ intersects $L$ transversely, and $L$ is called {\em minimal} if every leaf of $L$ is dense in $L$. Any minimal lamination is connected. For a parametrized simple geodesic $l$ starting at a puncture (see Section \ref{arc} for our convention on parametrization), $l$ is said to be {\em asymptotic to $L$} if $l\pitchfork L =\varnothing$ and $\displaystyle{\lim_{t\rightarrow \infty}} d(l(t), L)=0$. We let $L'\subseteq L$ be $L$ with all isolated leaves removed, and call it the {\em derived lamination} of $L$. For more on geodesics laminations, see \cite{Canary} and \cite{C}.

To state the following proposition, we first define {\em a crown} and {\em a punctured crown} to be complete hyperbolic surfaces with finite area and geodesic boundary, which are homeomorphic to $(S^1\times [0,1])\setminus A$ and $(S^1\times (0,1])\setminus A$, respectively, where $A$ is a finite subset of $S^1 \times \{1\}$; see Figure \ref{figure0}. Let $L$ be a minimal lamination which is not a simple closed geodesic and $P$ be a maximal collection of disjoint simple closed geodesics such that $P\cap L=\varnothing$. We can see that each component of $S\setminus (P\cup L)$ is the interior of a complete surface of finite area with geodesic boundary. By Theorem 2.10 of \cite{C}, such a complete hyperbolic surface is the complement of a finite set of points in a compact surface with boundary. Let $Y$ be the component of $S\setminus P$ containing $L$. Note that any simple closed geodesic $c$ in $Y$ intersects $L$ transversely (i.e. $L$ fills $Y$), otherwise we could add $c$ to $P$, contradicting maximality.
 \begin{figure}[htbp]
 	\includegraphics[scale=0.75]{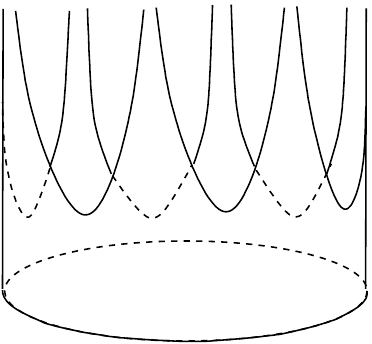}
 	\hspace{2cm} 
 	\includegraphics[scale=0.8]{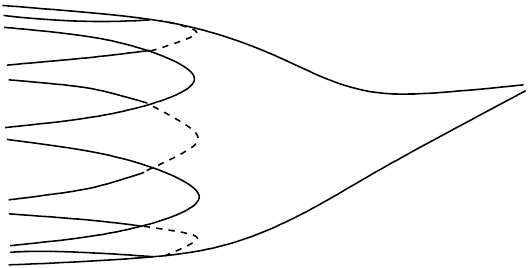} 
 	\caption{A crown (left) and a punctured crown (right).} 
 	\label{figure0} 
 \end{figure} 
 \begin{proposition} \label{crown}
 	Every component of $Y\setminus L$ is isometric to the interior of a finite sided ideal polygon, a crown, or a punctured crown.
 \end{proposition}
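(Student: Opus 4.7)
The plan is to analyze each component $C$ of $Y\setminus L$ via its metric completion $\bar C$, which is a complete finite-area hyperbolic surface with geodesic boundary. By Theorem 2.10 of \cite{C}, $\bar C$ is then the complement of finitely many points in a compact surface with boundary, with the removed points appearing either as interior cusps or as ideal vertices along the boundary. To identify $\bar C$ with one of the three allowed models I would combine three ingredients: the minimality of $L$ (which rules out closed leaves in $L$), the filling hypothesis for $L$ together with the maximality of $P$, and the fact that a simple complete geodesic cannot be confined to a (possibly once-punctured) disk.

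Since $L$ is minimal and not a simple closed geodesic, $L$ contains no closed leaf. Every closed boundary component of $\bar C$ must therefore come from $P\subset\partial Y$, while every non-closed boundary component of $\bar C$ is a bi-infinite leaf of $L$ both of whose ends approach ideal vertices of $\bar C$. These ideal vertices correspond to actual punctures of $S$, since a complete hyperbolic surface of finite area has no ``internal'' cusps. Chaining successive bi-infinite leaves at common ideal vertices then assembles into what I will call a polygonal boundary component of $\bar C$, a topological circle built from finitely many leaf-arcs meeting at cusps of $S$.

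Next I would use the filling hypothesis and the maximality of $P$ to show that $C$ contains no essential simple closed geodesic: any such geodesic would be a simple closed geodesic of $Y$ disjoint from both $L$ and $P$ and not in $P$, contradicting the maximality of $P$. Combining this with the disk-trapping principle — a simple closed curve in $C$ that bounds a disk or a once-punctured disk in $Y$ must already bound one in $C$, since no simple complete geodesic can be trapped in such a region — I conclude that every simple closed curve in $C$ is inessential or peripheral in $C$. Consequently $C$ is topologically a sphere with at most three ends, i.e.\ a disk, an annulus, or a pair of pants; moreover $\bar C$ must carry at least one polygonal boundary component, for otherwise $C$ would be both open and closed in the connected surface $Y$, forcing $L=\varnothing$.

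A case analysis on the topology of $C$ then gives the three stated models. A disk $C$ has one polygonal boundary and no other ends, so $\bar C$ is the interior of an ideal polygon. An annulus $C$ has one polygonal end and one other end that is either a component of $P$ (a crown) or a cusp of $S$ (a punctured crown). The remaining configurations — an annulus with two polygonal ends, or a pair of pants with various end types — I would exclude by considering a simple closed curve $c$ peripheral to a forbidden end: such a $c$ lies in $C$ disjoint from $L$, so its class in $\pi_1(Y)$ is trivial, parabolic, or hyperbolic, and each option contradicts either the disk-trapping argument, the identification of each cusp of $S$ as an end of the claimed type, or the maximality of $P$. The main obstacle is precisely this last step, where the filling and maximality hypotheses must be combined with careful tracking of how cusps of $S$ and components of $P$ can appear along $\partial\bar C$ in order to eliminate every unwanted topology.
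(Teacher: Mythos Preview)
Your overall strategy---classify each component $C$ topologically as a disk, annulus, or pair of pants and then exclude the unwanted configurations using minimality, filling, and maximality of $P$---matches the paper's, but you have a genuine misidentification of the ends of $\bar C$. The ideal vertices along a polygonal boundary of $\bar C$ do \emph{not} correspond to punctures of $S$: they are the ``spikes'' where two adjacent boundary leaves of $L$ become asymptotic, and these spikes dive into the lamination (no leaf of $L$ heads out a cusp, since $L$ is minimal and not a closed geodesic). Conversely, $\bar C$ \emph{can} have an internal cusp---precisely when $C$ contains a puncture of $S$---and this is exactly the punctured-crown case, so your assertion that $\bar C$ has no internal cusps is wrong. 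Once you correct the bookkeeping (closed boundary components $\leftrightarrow$ curves of $P$; ideal boundary vertices $\leftrightarrow$ spikes into $L$; internal cusps $\leftrightarrow$ punctures of $S$), your case analysis can be salvaged, though your elimination of the two-polygonal-end annulus and the pants remains only a sketch.

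The paper organizes the same idea more directly by looking at components $Y'$ of $S\setminus(P\cup L)$ rather than of $Y\setminus L$. Maximality of $P$ immediately forces each $Y'$ to be a disk, annulus, or pair of pants. The disk case gives an ideal polygon; the annulus case gives a crown or punctured crown, since otherwise $Y'$ would contain a simple closed geodesic contradicting maximality of $P$. For the pair-of-pants case the paper observes that the same maximality argument forces all three boundary circles of $\overline{Y'}$ to be closed geodesics; since $L$ contains no closed geodesic, $\overline{Y'}\cap L=\varnothing$, so $Y'\not\subseteq Y$ and this component simply does not appear in $Y\setminus L$. This avoids your more delicate peripheral-curve exclusion argument entirely.
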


For closed surfaces, this basically follows from Lemma 4.4 of \cite{C}. Here we sketch an alternate proof for any surface. 

\begin{proof}[Sketch of proof.]
    By maximality of $P$, each component $Y'$ of $S\setminus (P\cup L)$ is a disk, an annulus, or a pair of pants. If $Y'$ is a disk, then it is the interior of a finite sided ideal polygon. If $Y'$ is an annulus, $Y'$ must be isometric to the interior of a crown or a punctured crown: Otherwise, it contains a simple closed geodesic, which contradicts the maximality of $P$. 
    
    Suppose $Y'$ is a pair of pants. We note that $Y$ must be the interior of a compact hyperbolic surface with closed geodesic boundary: Otherwise it contains a simple closed geodesic, again contradicting the maximality of $P$. Since $L$ contains no closed geodesics, $\overline{Y'}\cap L =\varnothing$, hence $Y'\nsubseteq Y$. So, every component of $Y\setminus L$ has the required type.
\end{proof}

Every geodesic lamination on $S$ consists of a finite set of minimal sublaminations together with a finite set of additional bi--infinite geodesics (isolated) where each end goes out a cusp of $S$ or is asymptotic to one of the minimal sublaminations; see \cite{Canary} or \cite{C}. If $L_0\subseteq L$ is a minimal component of a geodesic lamination $L$ on $S$ which is not a closed geodesic, we write $Y_{L_0}$ for the subsurface of $S$ filled by $L_0$ described above. If $L_0$ is a closed geodesic, let $Y_{L_0}$ be a small annular neighborhood of $L_0$.

 The set of all geodesic laminations on $S$ is denoted by $\mathcal{G}(S)$. The Hausdorff distance  $d_H$ between closed subsets of $S^0$ determines a metric on $\GS$ (any lamination $L$ is determined by $L\cap S^0$). This makes $\GS$ into a compact metric space. The notation $\xrightarrow{H}$ means convergence in this Hausdorff metric. 

A geodesic lamination $L$ is called an {\em ending lamination} if it is minimal and fills $S$; so every principle region is an ideal polygon or a punctured crown (which also refer to as a punctured ideal polygon). The set of all ending laminations is denoted by $\ELS$. We define another subset of $\mathcal{G}(S)$ called the {\em peripherally ending laminations} by 
\[\EL0S = \{\textit{L} \in \mathcal{G}(S) ~|~ \text{\textit{L} is minimal and fills a subsurface } Y_{L_0} \text{ containing all punctures}\}.\]
Note that  $\ELS \subseteq \EL0S \subseteq  \mathcal{G}(S)$, and for $L\in \EL0S$, every puncture is contained in a unique principle region which is a punctured ideal polygon; see Figure \ref{figure1}.

\begin{figure}[htbp]
	\includegraphics[scale=0.6]{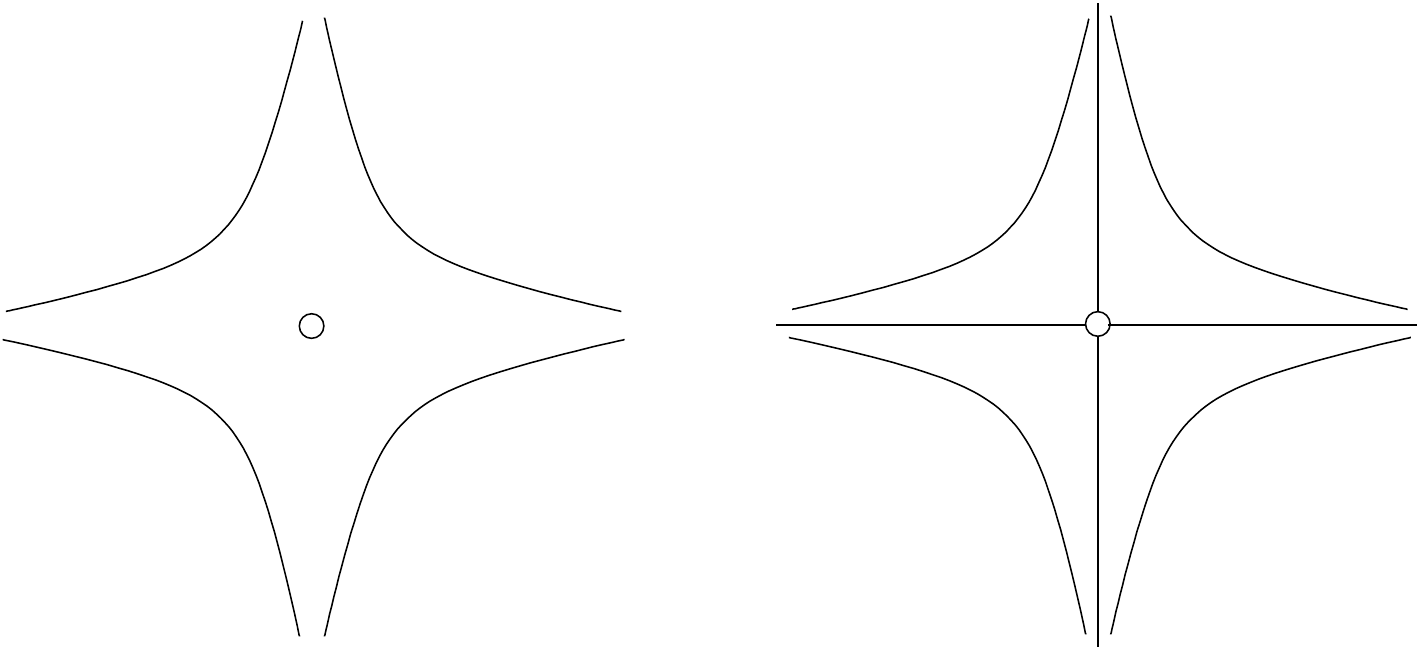} 
	\caption{A punctured crown (left) viewed as a punctured ideal polygon. All possible arcs from the puncture asymptotic to the sublamination (right).} 
	\label{figure1} 
\end{figure} 
 
  Next, we will describe the topology on $\EL0S$ and $\ELS$ that we will be interested in. Set \[U_\epsilon(L_0) = \{ L \in\EL0S \mid N_\epsilon(L) \supseteq L_0 \}\] where $N_\epsilon(L)$ is the $\epsilon$--neighborhood of $L$ on $S$.
   
  \begin{lemma} \label{subset}
  	If $L \in U_\epsilon(L_0)$, then there exists $\delta >0$ such that $U_\delta(L) \subseteq U_\epsilon(L_0)$.
  \end{lemma}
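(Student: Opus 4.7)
The plan is to upgrade the hypothesis $L_0\subseteq N_\epsilon(L)$ to a uniform strict containment $L_0\subseteq N_{\epsilon'}(L)$ for some $\epsilon'<\epsilon$, and then invoke the triangle inequality on the Hausdorff neighborhoods to propagate this to nearby laminations. The whole argument is essentially the standard basis-of-neighborhoods check, modulo one compactness input.

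First I would observe that $L_0$ is compact: since $L_0\in\EL0S$ is minimal, either $L_0$ is a closed geodesic (trivially compact) or every leaf is dense in $L_0$, which rules out leaves escaping to infinity down a cusp and so confines $L_0$ to a bounded part of $S$ (equivalently, to the compact $S^0$, where the paper's Hausdorff metric on $\GS$ lives anyway). The continuous function $x\mapsto d(x,L)$ on the compact set $L_0$ takes values in $[0,\epsilon)$ by hypothesis, so
\[
\epsilon':= \max_{x\in L_0} d(x,L) < \epsilon.
\]
I would then set $\delta := \epsilon - \epsilon' > 0$.

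To finish, suppose $L'\in U_\delta(L)$, i.e.\ $L\subseteq N_\delta(L')$. Given $x\in L_0$, choose $y\in L$ with $d(x,y)=d(x,L)\leq \epsilon'$ (available since $L$ is closed, so the infimum is attained) and $z\in L'$ with $d(y,z)<\delta$. The triangle inequality gives
\[
d(x,z) \;<\; \epsilon' + \delta \;=\; \epsilon,
\]
so $x\in N_\epsilon(L')$. Hence $L_0\subseteq N_\epsilon(L')$, which says $L'\in U_\epsilon(L_0)$, as required.

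The only genuinely non-formal ingredient is the compactness of $L_0$, and that is where I expect whatever subtlety there is to sit: one must know that minimality forbids $L_0$ from escaping into a cusp, so that $L_0$ is a closed subset of the compact piece $S^0$ on which $d_H$ is defined. Once compactness is in hand, everything else is a direct triangle-inequality manipulation.
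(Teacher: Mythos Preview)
Your proof is correct and follows exactly the paper's approach: find $\epsilon'<\epsilon$ with $L_0\subseteq N_{\epsilon'}(L)$, set $\delta=\epsilon-\epsilon'$, then use $N_{\epsilon'}(N_\delta(L'))\subseteq N_\epsilon(L')$. The only difference is that you explicitly justify the existence of such an $\epsilon'$ via compactness of $L_0$, whereas the paper simply asserts it; this is a welcome clarification rather than a different argument.
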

  \begin{proof} Assume that $L \in U_\epsilon(L_0)$. By definition, $L_0 \subseteq N_\epsilon(L)$. There is $0<\epsilon '< \epsilon $ such that $N_{\epsilon '} (L)\supseteq L_0$. Set $\delta = \epsilon - \epsilon '$ and let $L_1 \in U_\delta(L)$ so that $L \subseteq N_\delta(L_1)$ . Then \[L_0 \subseteq N_{\epsilon '} (L) \subseteq N_{\epsilon '}(N_{\epsilon - \epsilon '}(L_1))\subseteq N_{\epsilon}(L_1).\] This means $L_1 \in U_\epsilon(L_0)$, hence $U_\delta(L) \subseteq U_\epsilon(L_0)$, as required. 	
  \end{proof}
  
  Let $\mathcal{B} = \{U_\epsilon(L_0)| \epsilon >0$ and $L_0 \in \EL0S\}$. Since the elements in $\mathcal{B}$ cover $\EL0S$, Lemma \ref{subset} implies that $\mathcal{B}$ is the basis for a topology, and $\{ U_{\epsilon}(L_0)\}_{\epsilon >0}$ is a basis at $L_0$ (consequently, the topology is $1^{st}$ countable).
  
   For $\{L_n\}^{\infty}_{n=1}\subseteq \EL0S$, say $\{L_n\}$ {\em coarse Hausdorff converges to $L_0 \in \EL0S$}, written $L_n \xrightarrow{CH} L_0$, if for any subsequence $\{L_{n_k}\}$ such that $L_{n_k} \xrightarrow{H} L$, we have $L \supseteq L_0$; see \cite{Ham}. The next proposition tells us that convergence in the topology on $\ELS$ and $\EL0S$ just defined is precisely coarse Hausdorff convergence, and in particular, this is the {\em Thurston topology}; see Section 4.1 of \cite{Canary}.
 
  \begin{proposition} \label{seq}
  	$L_n \xrightarrow{CH} L$ if and only if $L_n \rightarrow L$ in the topology of $\EL0S$ defined above.
  \end{proposition}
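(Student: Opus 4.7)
The plan is to prove the two implications separately, using sequential compactness of $\GS$ in the Hausdorff metric as the key tool to convert between one-sided containment statements (the basis $U_\epsilon(L_0)$) and genuine Hausdorff limits (the definition of $\xrightarrow{CH}$). Throughout, the topology on $\EL0S$ is first countable, so characterizing convergence by sequences is enough.

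For the direction $(\Rightarrow)$, assume $L_n \xrightarrow{CH} L$ and suppose for contradiction that $L_n \not\to L$ in the basis topology. Then some basic neighborhood $U_\epsilon(L)$ is avoided infinitely often, producing a subsequence $\{L_{n_k}\}$ with $L \not\subseteq N_\epsilon(L_{n_k})$ for every $k$. By compactness of $\GS$ in the Hausdorff metric, pass to a further subsequence converging in Hausdorff to some $L' \in \GS$. The hypothesis of coarse Hausdorff convergence forces $L' \supseteq L$. But for all sufficiently large $k$, Hausdorff convergence gives $L' \subseteq N_{\epsilon/2}(L_{n_k})$, and hence $L \subseteq L' \subseteq N_\epsilon(L_{n_k})$, contradicting the choice of subsequence.

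For the direction $(\Leftarrow)$, assume $L_n \to L$ in the topology defined by $\mathcal{B}$. Let $\{L_{n_k}\}$ be any subsequence with $L_{n_k} \xrightarrow{H} L'$ in $\GS$; I must show $L \subseteq L'$. Fix $\epsilon > 0$. By topological convergence applied to the subsequence, eventually $L_{n_k} \in U_\epsilon(L)$, so $L \subseteq N_\epsilon(L_{n_k})$, while Hausdorff convergence gives $L_{n_k} \subseteq N_\epsilon(L')$ for all large $k$. Chasing a point $p \in L$ through these two inclusions produces $q_k \in L_{n_k}$ with $d(p,q_k) < \epsilon$ and then $r_k \in L'$ with $d(q_k,r_k) < \epsilon$, so $d(p,L') < 2\epsilon$. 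Letting $\epsilon \to 0$ and using that $L'$ is closed yields $p \in L'$, hence $L \subseteq L'$ as required.

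The only mild subtlety, and what I expect to be the main obstacle, is bookkeeping the difference between the Hausdorff metric on $\GS$ (which is defined via $L \cap S^0$) and the ambient $\epsilon$-neighborhood $N_\epsilon(L)$ in $S$ used to define $U_\epsilon(L_0)$. Since any geodesic lamination on $S$ is determined by its intersection with the compact piece $S^0$, and pieces of leaves that exit into horoball cusp neighborhoods are controlled, Hausdorff closeness on $S^0$ is equivalent, up to multiplicative constants in $\epsilon$, to closeness as subsets of $S$; this is what legitimizes the inclusions $L_{n_k} \subseteq N_\epsilon(L')$ and $L' \subseteq N_\epsilon(L_{n_k})$ used above. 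Once this reconciliation is in place, the rest of the argument is just the elementary triangle-inequality manipulation sketched, together with a standard compactness-plus-contradiction argument.
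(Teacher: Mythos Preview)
Your proof is correct and follows essentially the same approach as the paper. Both directions match: the forward direction is the identical compactness-plus-contradiction argument, and the backward direction is the same $2\epsilon$ triangle-inequality estimate, which the paper packages as showing $\bigcap_k N_{d_k}(L_{n_k}) = L_0$ while you phrase it pointwise; your remark about reconciling $d_H$ on $S^0$ with $N_\epsilon$ on $S$ is a point the paper simply glosses over.
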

  \begin{proof}
  	Assume that $L_n \xrightarrow{CH} L$. Suppose that $\{L_n\}$ does not converge to $L$. Then there exist $\epsilon > 0$ and a subsequence $\{L_{n_k} \}$ such that $L_{n_k} \notin U_\epsilon(L)$ for all $n_k$, i.e. $N_\epsilon(L_{n_k})\nsupseteq L$. By passing to a further subsequence if necessary, we may assume that $L_{n_{k}} \xrightarrow{H} L_0 \supseteq L$. By definition, there is $N>0$ such that for all $n_k>N$, $d_H(L_{n_k}, L_0)<\epsilon$. Therefore $L \subseteq L_0 \subseteq N_\epsilon(L_{n_k})$ which is a contradiction.
  	
  	Conversely, suppose that $L_n \rightarrow L$ in the above topology. Pass to any subsequence such that $L_{n_k} \xrightarrow{H} L_0$. Let $d_k' > d_H(L_{n_k},L_0)$ so that $d_k' \to 0$, and let $d_k''$ be such that $L \subseteq N_{d_k''}(L_{n_k})$, and so that $d_k'' \to 0$.  Now set $d_k = \max\{d_k',d_k''\}$. Observe that $L \subset \cap_k N_{d_k}(L_{n_k})$.  On the other hand, we can show that this intersection is exactly $L_0$.  To see this, first note that $L_0$ is contained each $N_{d_k}(L_{n_k})$ for all $k$, and so is contained in the intersection.  On the other hand, any point $x \in N_{d_k}(L_{n_k})$ has distance at most $2d_k$ to a point of $L_0$.  Therefore, the distance of any point $x$ in the intersection to a point in $L_0$ is zero, hence $x \in L_0$. It follows that $L\subseteq L_0$, and hence $L_n \xrightarrow{CH} L$.
  	
  \end{proof}
  \begin{corollary}
  	For any topological space $Y$, $f\colon \EL0S \to Y$ is continuous if and only if $f(L_n)\to f(L_0)$ whenever $L_n \xrightarrow{CH} L_0$.
  \end{corollary}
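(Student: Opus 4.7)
The corollary is essentially the standard fact that in a first countable space, continuity coincides with sequential continuity, combined with Proposition \ref{seq} to translate between coarse Hausdorff convergence and convergence in the topology on $\EL0S$ defined by the basis $\mathcal{B}$. So the plan is to verify these two ingredients and then combine them in the usual way.

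The forward implication is immediate. Assume $f$ is continuous and $L_n \xrightarrow{CH} L_0$. By Proposition \ref{seq}, $L_n \to L_0$ in the topology on $\EL0S$. Continuity of $f$ then gives $f(L_n) \to f(L_0)$ in $Y$, without any further input.

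For the converse, assume $f(L_n) \to f(L_0)$ in $Y$ whenever $L_n \xrightarrow{CH} L_0$. I would argue by contradiction: suppose $f$ is not continuous at some $L_0 \in \EL0S$, so that there is an open neighborhood $V$ of $f(L_0)$ in $Y$ for which $f^{-1}(V)$ is not a neighborhood of $L_0$ in $\EL0S$. Here is where first countability enters: the paragraph following Lemma \ref{subset} records that $\{U_{1/n}(L_0)\}_{n \geq 1}$ is a countable basis at $L_0$. Consequently, for each $n$ there must exist $L_n \in U_{1/n}(L_0)$ with $f(L_n) \notin V$ (otherwise some $U_{1/n}(L_0)$ would sit inside $f^{-1}(V)$, making $f^{-1}(V)$ a neighborhood of $L_0$). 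The sequence $\{L_n\}$ then converges to $L_0$ in the topology of $\EL0S$ (for any basic neighborhood $U_\epsilon(L_0)$, pick $n$ with $1/n < \delta$ where $U_\delta(L_0) \subseteq U_\epsilon(L_0)$; actually using Lemma \ref{subset} and the nested family suffices), and by Proposition \ref{seq} we have $L_n \xrightarrow{CH} L_0$. By hypothesis $f(L_n) \to f(L_0)$, but this contradicts $f(L_n) \notin V$ for every $n$.

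I expect no real obstacle here; the only subtlety is not to confuse the two notions of convergence, which is exactly what Proposition \ref{seq} is there to handle. The proof is short once one explicitly records that $\{U_{1/n}(L_0)\}$ is a countable basis at $L_0$ and invokes Proposition \ref{seq} at both ends of the equivalence.
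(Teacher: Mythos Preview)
Your proof is correct and is exactly the argument the paper intends: the corollary is stated without proof in the paper, as an immediate consequence of Proposition~\ref{seq} together with the first countability observation made right after Lemma~\ref{subset}. Your write-up simply makes that implicit reasoning explicit.
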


 \subsection{Unicorn arcs, unicorn paths and their properties}\label{unicorn}Given two arcs $a$ and $b$ that are in minimal position, choose an endpoint of $a$ and of $b$. A {\em unicorn arc between $a$ and $b$ } is an embedded arc obtained from a segment of $a$ from the endpoint and a segment of $b$ from the endpoint up to a point in $a \cap b$. Note that not all points in $a \cap b$ determine unicorn arcs. Given two unicorn arcs $a_i$ and $a_j$, we say that $a_i<a_j$ if $a_i$ contains a longer segment of $a$ than $a_j$. Let $\{a_1,a_2,...,a_{n-1}\}$ be the ordered set of all unicorn arcs. The sequence $P(a, b)=\{a=a_0,a_1,a_2,...,a_n=b\}$ is called the {\em unicorn path between $a$ and $b$.}  See \cite{A} for further details.
 
  The following two lemmas are used to prove that unicorn paths stay close to any geodesics connecting the endpoints in $\AS$; see Lemma \ref{closed1}. We will use them to prove a similar property in $\ACS$; see Lemma \ref{closed}.

 \begin{lemma} \cite{A} \label{subpath}
 	For every $0\leq i < j\leq n$, either $P(a_i,a_j)$ is a subpath of $P(a,b)=\{a=a_0,a_1,a_2,...,a_n=b\}$, or $j=i+2$ and $a_i$ and $a_j$ represent adjacent vertices of $\AS$.
 \end{lemma}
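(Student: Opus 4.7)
The plan is to identify the unicorn arcs in $P(a_i, a_j)$ with the contiguous run $a_{i+1}, \ldots, a_{j-1}$ inside $P(a, b)$, by tracking how switch points on $a \cap b$ distribute along the $a$- and $b$-subsegments of each $a_k$. Let $*_a$ and $*_b$ denote the chosen endpoints of $a$ and $b$, and write each $a_k = A_k \cup B_k$ with $A_k$ the subsegment of $a$ from $*_a$ to the switch point $p_k \in a \cap b$, and $B_k$ the subsegment of $b$ from $p_k$ to $*_b$. The ordering on $P(a, b)$ immediately gives $A_0 \supsetneq A_1 \supsetneq \cdots \supsetneq A_n$. The first key step is the reverse ordering on the $b$-side: for $k < l$ the point $p_l$ lies in the interior of $A_k$, and if $p_l$ also lay in $B_k$ then $a_k = A_k \cup B_k$ would self-intersect at $p_l$, contradicting embeddedness. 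Hence $B_0 \subsetneq B_1 \subsetneq \cdots \subsetneq B_n$, i.e., switch points appear along $a$ and along $b$ in opposite orders.

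Now fix $i < j$ and analyze $P(a_i, a_j)$ using the endpoints inherited from $(a,b)$, namely $*_a$ on $a_i$ and $*_b$ on $a_j$. For each $k$ with $i < k < j$, the opposite-ordering step places $p_k$ in $A_i \cap B_j$; this is an isolated transverse intersection of $a_i$ with $a_j$, and the associated unicorn arc equals $A_k \cup B_k = a_k$, which is already embedded. Conversely, any embedded unicorn arc between $a_i$ and $a_j$ is cut off at an isolated transverse point of $A_i \cap B_j$ lying strictly between $p_j$ and $p_i$ on both $a$ and $b$; the same embeddedness/opposite-ordering argument forces it to coincide with some $p_k$, $i < k < j$. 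These arcs are ordered in $P(a_i, a_j)$ exactly as they are in $P(a, b)$, so whenever the intermediate list is nonempty, $P(a_i, a_j) = \{a_i, a_{i+1}, \ldots, a_j\}$, a subpath of $P(a, b)$. For $j = i+1$ there is no intermediate arc, consecutive unicorns are disjoint by \cite{A}, and $\{a_i, a_j\}$ is the trivial subpath. The genuine exception is $j = i+2$: the unique candidate $p_{i+1}$ gives one crossing of $a_i$ and $a_j$, whose middle subsegments---a piece of $a$ from $p_j$ to $p_i$ and a piece of $b$ from $p_j$ to $p_i$---share endpoints and cross only at $p_{i+1}$, hence may bound an embedded disk; if they do, a bigon move makes $a_i$ and $a_j$ disjoint, hence adjacent in $\AS$, and $P(a_i, a_j) = \{a_i, a_j\}$, which is not the length-two subpath of $P(a, b)$. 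Otherwise $P(a_i, a_j) = \{a_i, a_{i+1}, a_j\}$, a subpath.

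The hardest step is the converse in the previous paragraph: because $a_i$ and $a_j$ share the subsegments $A_j$ on $a$ and $B_i$ on $b$, they are not in minimal position as presented, and one must rule out both (i) candidate unicorn arcs coming from intersections in the shared segments, which collapse to $a_i$ or $a_j$ itself rather than a new arc, and (ii) non-switch intersections in $A_i \cap B_j$, which cannot yield an embedded arc by the embeddedness principle. Once this correspondence is tight, the rest is indexing bookkeeping.
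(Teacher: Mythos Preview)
The paper does not prove this lemma; it is quoted verbatim from \cite{A}, so there is no in-paper argument to compare against. Your outline is essentially that of \cite{A}: the opposite-ordering observation $B_0\subsetneq\cdots\subsetneq B_n$ is the heart of the matter and your derivation of it (via embeddedness of $a_k$) is correct, as is the forward identification of each $a_k$, $i<k<j$, with a unicorn arc between $a_i$ and $a_j$.

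The gap is in the converse, and you locate it yourself without closing it. The unicorn path $P(a_i,a_j)$ is defined only after $a_i$ and $a_j$ are put in minimal position, and you never carry this out. Once the shared tails $A_j$ and $B_i$ are pushed off, the transverse intersections are the points of $a\cap b$ lying in \emph{both} middle segments $A_i\setminus A_j$ and $B_j\setminus B_i$; this set contains $\{p_{i+1},\dots,p_{j-1}\}$ but may also contain non-switch points of $a\cap b$. Your item~(ii) observes that such points do not yield embedded unicorn arcs, but that is not sufficient: reaching minimal position means removing bigons, and a bigon pairing a non-switch point with an adjacent $p_k$ would delete $p_k$ as well, potentially destroying the arc $a_k$ you want to keep. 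One must argue that no such bigon occurs except possibly the single one permitted when $j=i+2$. The same oversight surfaces in your $j=i+2$ paragraph, where the assertion that the middle pieces ``cross only at $p_{i+1}$'' is unjustified. The missing ingredient is precisely the short bigon analysis that \cite{A} performs; once that is supplied, your argument is complete.
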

 \begin{lemma}\cite{A} \label{xxx}
 	Let $x_0,...,x_m$ with $m\leq 2^k$ be sequence of vertices in $\AS$. Then for any $c \in P(x_0, x_m)$, there is  $0\leq i < m$ with $c^* \in P(x_i,x_{i+1})$ at distance at most $k$ from $c$.                    
 \end{lemma}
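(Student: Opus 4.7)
The plan is to prove this by induction on $k$, halving the sequence at each step and relying on the slim-triangle property for unicorn paths. The key geometric input, which is a companion result to Lemma \ref{subpath} proved in \cite{A}, is that for any three arcs $a, b, d$ in pairwise minimal position the three unicorn paths $P(a,b)$, $P(b,d)$ and $P(a,d)$ form a $1$-slim triangle in $\AS$: every vertex of $P(a,d)$ lies within distance $1$ in $\AS$ of some vertex of $P(a,b) \cup P(b,d)$. I would use this as a black box.

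For the base case $k = 0$ we have $m \leq 1$; if $m = 1$ then $c \in P(x_0,x_1)$ and we take $c^* = c$ with $i = 0$. For the inductive step, suppose the statement is known for $k$, and let $x_0,\ldots,x_m$ with $m \leq 2^{k+1}$ and $c \in P(x_0,x_m)$. Set $j = \lceil m/2 \rceil$, so that both subsequences $x_0,\ldots,x_j$ and $x_j,\ldots,x_m$ consist of at most $2^k$ consecutive pairs. Applying the slim-triangle property to the triple $(x_0, x_j, x_m)$ produces a vertex $c'$ on $P(x_0,x_j) \cup P(x_j,x_m)$ with $d(c, c') \leq 1$. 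The inductive hypothesis, applied to whichever half contains $c'$, then yields an index $i$ with $0 \leq i < m$ and some $c^* \in P(x_i, x_{i+1})$ with $d(c', c^*) \leq k$. The triangle inequality gives $d(c, c^*) \leq k+1$, as required.

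The main obstacle is simply ensuring that the slim-triangle input is in hand, since the excerpt explicitly records only Lemma \ref{subpath}. In \cite{A} this is proved by a direct geometric argument that, given a vertex $c \in P(x_0, x_m)$, constructs an arc disjoint from $c$ which lies on one of $P(x_0, x_j)$ or $P(x_j, x_m)$; I would invoke that lemma rather than re-derive it here. Once it is available, the induction is entirely formal, and the exponential bound $m \leq 2^k$ is exactly what halving delivers.
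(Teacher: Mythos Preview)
Your argument is correct and is precisely the proof given in \cite{A}: induct on $k$, split at the midpoint, use the $1$--slim triangle property of unicorn paths to move $c$ to one of the two halves at cost $1$, and apply the inductive hypothesis. Note that the present paper does not actually supply its own proof of this lemma; it simply quotes the statement from \cite{A}, so there is nothing further to compare against. Your identification of the slim-triangle lemma as the needed black box is exactly right---it is Lemma~3.3 of \cite{A}, and once it is in hand the induction is formal.
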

 \begin{proposition}\cite{A}\label{closed1}
 	Given two arcs $a$ and $b$ in $\A(S)$ and $g$ a geodesic in $\AS$ connecting $a$ to $b$, every arc in $P(a,b)$ is within distance $6$ of $g$. Consequently, the Hausdorff distance between $g$ and $P(a,b))$ is at most 12.
 \end{proposition}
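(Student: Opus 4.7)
The plan is to apply Lemma~\ref{xxx} to the vertex sequence of $g$ and exploit the fact that adjacent vertices of $\AS$ have trivial unicorn paths. Write $g = (a = x_0, x_1, \ldots, x_m = b)$ and fix $c \in P(a, b) = P(x_0, x_m)$. Choose $k$ minimal with $m \leq 2^k$. Lemma~\ref{xxx} produces an index $i$ and an arc $c^* \in P(x_i, x_{i+1})$ with $d_{\AS}(c, c^*) \leq k$. Because $x_i$ and $x_{i+1}$ are adjacent vertices of $\AS$, their geodesic representatives are disjoint, so there are no intersection points from which to cut unicorn arcs: the unicorn path $P(x_i, x_{i+1})$ degenerates to $\{x_i, x_{i+1}\} \subseteq g$. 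Hence $c^* \in g$ and $d_{\AS}(c, g) \leq k$, yielding the logarithmic bound $d_{\AS}(c, g) \leq 1 + \log_2 m$.

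Upgrading this logarithmic estimate to the uniform constant $6$ is the heart of the argument. I would use an extremal/self-improvement step: let $c_\ast \in P(a,b)$ attain $D := \max_{c} d_{\AS}(c, g)$ and let $p \in g$ be a closest point, splitting $g = g[a,p] \cup g[p,b]$ of lengths $m_1$ and $m_2$. By Lemma~\ref{subpath}, $P(a, c_\ast)$ and $P(c_\ast, b)$ are themselves subpaths of $P(a, b)$, so every vertex in them is also at distance at most $D$ from $g$. Applying the logarithmic bound to each subpath with respect to geodesics from $a$ to $c_\ast$ and from $c_\ast$ to $b$ (whose lengths are controlled by $m_1 + D$ and $m_2 + D$), and combining with the triangle inequality and the fact that consecutive vertices of a unicorn path are at $\AS$-distance $1$, produces a self-referential numerical inequality that pins $D$ down to the explicit constant $6$.

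For the Hausdorff bound of $12$, one tracks the nearest-point projection $\pi \colon P(a, b) \to g$. Since $\pi(a) = a$, $\pi(b) = b$, and consecutive vertices of $P(a, b)$ are at $\AS$-distance $1$ with each vertex within $6$ of $g$, their images under $\pi$ differ by at most $2 \cdot 6 + 1$ in $\AS$. The image $\pi(P(a,b))$ is therefore coarsely dense in $g$: every vertex of $g$ lies within $6$ of a projected vertex and hence within $6 + 6 = 12$ of $P(a, b)$.

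The main obstacle is clearly the bootstrap step of the second paragraph. The conclusion of Lemma~\ref{xxx} by itself gives only a logarithmic bound, and collapsing it to an absolute constant requires genuinely feeding back the structural content of Lemma~\ref{subpath}, namely that unicorn paths restrict to unicorn paths between their intermediate vertices. Without this recursive closure property, no uniform bound would emerge from the argument.
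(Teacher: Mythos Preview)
The paper does not reprove the first statement at all: it simply cites \cite{A} for the bound $6$ and then derives the Hausdorff bound $12$ from it. Your third paragraph is essentially the paper's derivation of that consequence (consecutive nearest points on $g$ are at most $13$ apart, so every vertex of $g$ is within $6$ of one of them, hence within $12$ of $P(a,b)$), so that part is fine.

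The genuine gap is your bootstrap in the second paragraph. Splitting $P(a,b)$ at the extremal vertex $c_\ast$ into $P(a,c_\ast)$ and $P(c_\ast,b)$ and applying the logarithmic bound to geodesics $[a,c_\ast]$ and $[c_\ast,b]$ does not give a self-referential inequality: those geodesics have lengths $\leq m_1+D$ and $\leq m_2+D$, so the resulting bound still involves $m_1,m_2$, which are unbounded. Nothing in that set-up forces an inequality of the form $D\leq f(D)$. The actual argument (as in \cite{A}, and reproduced in the paper's proof of Lemma~\ref{closed} for $\ACS$) \emph{localizes}: one takes a subpath of $P(a,b)$ of length $O(D)$ centered at $c_\ast$, projects its two endpoints to $g$ (each at distance $\leq D$), and concatenates with the short segment of $g$ between those projections. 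This produces a path from $\bar a'$ to $\bar b'$ of total length $O(D)$, and now Lemma~\ref{xxx} gives $d(c_\ast,\text{path})\leq O(\log D)$; since every vertex of that path is either on $g$ or at distance $\geq D$ from $c_\ast$ by construction, one obtains $D\leq O(\log D)$ and hence $D\leq 6$. The essential point you are missing is that the sequence fed into Lemma~\ref{xxx} must have length controlled by $D$, not by $m$; Lemma~\ref{subpath} is used to identify a short subpath of $P(a,b)$ as a genuine unicorn path, not to split $P(a,b)$ at $c_\ast$.
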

\begin{proof} The first statement is proved in \cite{A}. The last claim follows easily from this as we now explain. Consider consecutive points $x$ and $y$ of $P(a,b)$ and corresponding points $x'$ and $y'$ of $g$ with distance at most $6$ from $x$ and $y$, respectively. By the triangle inequality, we have $d(x',y') \leq d(x',x) + d(x,y) + d(y,y') = 6+1+6=13.$  Thus any point in $g$ between $x'$ and $y'$ is distance at most $6$ from one of $x'$ or $y'$, and hence the distance to one of $x$ or $y$ is at most $12$. If we consider all pairs of consecutive points $x$ and $y$ of $P(a,b)$ and all corresponding pairs of points $x'$ and $y'$ in $g$, union of all subpaths of $g$ connecting such pair $x'$ and $y'$ covers $g$. This together with Proposition \ref{closed1} imply that $d_H(g,P(a,b))\leq 12$.
\end{proof}
\begin{lemma} \label{closed}
	Given two arcs $a$ and $b$ in $\ACS$ and $g$ a geodesic in $\ACS$ connecting $a$ and $b$, then every curve in $P(a,b)$ is within distance $7$ of $g$. Consequently, the Hausdorff distance between $g$ and $P(a,b)$ is at most $14$.
\end{lemma}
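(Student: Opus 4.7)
The plan is to adapt the proof of Proposition \ref{closed1} to the setting of $\ACS$. The key difficulty is that the $\ACS$--geodesic $g$ may pass through closed--curve vertices, whereas the unicorn path $P(a,b)$ and the HPW machinery (Lemmas \ref{subpath} and \ref{xxx}) live in $\AS$. I would handle this by replacing each curve vertex of $g$ by a nearby arc, losing a single unit of distance in the process.

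Concretely, write $g = (a = x_0, x_1, \ldots, x_n = b)$. For each $i$, choose $y_i \in \A(S)$ by setting $y_i = x_i$ whenever $x_i$ is an arc and otherwise taking $y_i$ to be any arc disjoint from the closed curve $x_i$ (such an arc exists since $S$ has at least one puncture, so some component of $S \setminus x_i$ carries a proper arc). Then $y_0 = a$, $y_n = b$, and $d_{\ACS}(y_i, x_i) \leq 1$ for every $i$. Applying Lemma \ref{xxx} to the sequence $(y_0, \ldots, y_n)$ in $\A(S)$, together with the HPW argument underlying Proposition \ref{closed1}, I would conclude that for every vertex $c \in P(a,b) = P(y_0, y_n)$ there is some index $i$ with $d_{\AS}(c, y_i) \leq 6$. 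Since the inclusion $\AS \hookrightarrow \ACS$ is $1$--Lipschitz, the triangle inequality in $\ACS$ then yields
\[ d_{\ACS}(c, g) \;\leq\; d_{\ACS}(c, y_i) + d_{\ACS}(y_i, x_i) \;\leq\; 6 + 1 \;=\; 7. \]

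For the Hausdorff bound, I would repeat the triangle--inequality argument from the proof of Proposition \ref{closed1}, now in $\ACS$ with the new constant $7$. For consecutive vertices $x, y$ of $P(a,b)$ and nearest points $x', y' \in g$, one has $d_{\ACS}(x', y') \leq 7 + 1 + 7 = 15$, so any point of $g$ on the subsegment between $x'$ and $y'$ is within $\ACS$--distance $7$ of one of $x'$ or $y'$, hence within $\ACS$--distance $14$ of $\{x, y\} \subseteq P(a,b)$. Summing over consecutive pairs covers $g$, giving the Hausdorff bound $d_H(g, P(a,b)) \leq 14$.

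The step I expect to be the main obstacle is the invocation of the HPW machinery on the sequence $(y_0, \ldots, y_n)$, which is not an $\AS$--geodesic (consecutive $y_i$ and $y_{i+1}$ need not even be disjoint). The saving feature is that HPW's proof of Proposition \ref{closed1} rests on the \emph{slim--triangle} property of unicorn paths---if a vertex $v$ is disjoint from each of two arcs $a, b$, then every unicorn arc in $P(a,b)$ is a subarc of $a \cup b$ and hence disjoint from $v$---which requires only a sequence of arcs, not a geodesic, to yield the uniform $\AS$--bound of $6$ on distances from $P(a,b)$ to the sequence. Verifying this generalized form is the principal technical point of the proof.
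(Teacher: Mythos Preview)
There is a genuine gap. Your central claim---that for every $c\in P(a,b)$ there is some $i$ with $d_{\AS}(c,y_i)\le 6$---does not follow from the HPW machinery and is in fact false. The bound $6$ in Proposition~\ref{closed1} is not a property of arbitrary sequences of arcs: it comes from a bootstrap in which one takes $c$ at maximal distance $k$ from the target, moves $2k$ along $P(a,b)$ to $\bar a',\bar b'$, drops to nearest points $a',b'$ on the target, and then uses that the target is a \emph{geodesic} to bound the length of the subsegment $a'b'$ by $6k$, so that Lemma~\ref{xxx} applies to a path of length $\le 8k$ and yields $k\le\lceil\log_2 8k\rceil$. Your sequence $(y_i)$ is not an $\AS$--geodesic; since $\AS\hookrightarrow\ACS$ is not a quasi-isometry, consecutive $y_i,y_{i+1}$ may lie at arbitrarily large $\AS$--distance, so this step fails. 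For a concrete failure of the conclusion itself, take a curve $\gamma$ and arcs $a,b$ disjoint from $\gamma$ with $d_{\AS}(a,b)$ large; then $n=d_{\ACS}(a,b)=2$, and choosing $y_1=a$ (an arc disjoint from $\gamma$, hence an allowed choice) gives $\{y_0,y_1,y_2\}=\{a,b\}$, while the midpoint of $P(a,b)$ is at $\AS$--distance roughly $d_{\AS}(a,b)/2\gg 6$ from both.

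The paper avoids this by running the bootstrap in $\ACS$ rather than in $\AS$. Since $g$ \emph{is} an $\ACS$--geodesic, the concatenated path $\bar a'\!\to a'\!\to b'\!\to\bar b'$ has at most $8k$ vertices. Only this short path is converted to arcs: for each edge $x_ix_{i+1}$ one inserts an arc $\bar x_i$ disjoint from both endpoints, obtaining a sequence of $\le 8k$ arcs to which Lemma~\ref{xxx} applies. Any $c^*\in P(\bar x_i,\bar x_{i+1})$ lies in $\bar x_i\cup\bar x_{i+1}$ and is therefore disjoint from $x_{i+1}$, so $d_{\ACS}(c,x_{i+1})\le\lceil\log_2(8k-1)\rceil+1$, forcing $k\le 7$. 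The essential difference from your outline is that the curve-to-arc replacement is performed \emph{inside} the bootstrap, on a path whose length is already controlled by $k$, not on all of $g$ up front.
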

\begin{proof}
	  Let $c\in P(a,b)$ be at maximal distance $k>0$ from $g$. Let $\bar a' \bar b'$ be the maximal subpath of  $P(a ,b)$ containing $c$ with $\bar a'$ and $ \bar b'$ at distance $2k$ from $c$. If no $\bar a'$ exists, then $d(c,a) < 2k$, and we set $\bar a' = a$, and similarly for $\bar b'$. Then, by Lemma \ref{subpath}, $P(\bar a' ,\bar b') \subseteq P(a , b)$. Let $a' $ and $ b' $  be  vertices on $g$ closest to $\bar a'$ and $\bar b'$, respectively. In the case when $\bar a'=a$ and/or $\bar b'=b$, let $a'=a$ and/or $b'=b$, respectively.  We have $d(a' ,\bar a') \leq k$ and $d(b',\bar b') \leq k$. Thus $d(a', b')\leq 6k$. Concatenate the geodesic segment $a'b'$ of $g$ with any geodesics paths $a'\bar a'$ and  $b'\bar b'$. Let $\bar a'=x_1, x_2, ..., x_m = \bar b'$ be the consecutive vertices of the concatenation where $m\leq 8k$. For $1 \leq i\leq m-2$, let $\bar x_i$ be an arc adjacent to both $x_i$ and $x_{i+1}$. By Lemma \ref{xxx}, $c$ is at distance $\leq \lceil \log_2 8k-1 \rceil+1$ from some $x_i$. If $x_i \in g$, then $k \leq \lceil \log_2 8k-1 \rceil+1$. Otherwise, if $x \notin g$, x $\in$ $a'\bar a'$ or  $b'\bar b'$. Since $d(c,x_i)\geq d(c,\bar a')-d(a' ,\bar a') \geq k$, we also have  $k \leq \lceil \log_2 8k-1 \rceil+1 $. Thus $k\leq 7$.
\end{proof}	

 \section{ARC GRAPH}

 \subsection{Infinite unicorn paths}  Assume that $S$ has at least one puncture. Fix a puncture and let $a$ be an arc in $\AS$ realized by its geodesic representative in $S$ whose ends are at the puncture. Consider $L_0\in \EL0S$ and $l$ a bi--infinite geodesic starting at the puncture asymptotic to $L_0$. Note that $l \cap L_0 = \varnothing$. To choose an endpoint of $a$, we fix an orientation for $a$ so the terminal point is the chosen endpoint and consider the puncture as the endpoint of $l$.  A {\em unicorn arc for $a$ and $l$} is a simple arc consisting of a segment of $a$ and a segment of $l$ from the endpoints up to a point of intersection. For any two distinct unicorn arcs $a_i$ and $a_j$ constructed from $a$ and $l$, we say that $a_i<a_j$ if $a_i$ contains a longer segment of $a$ than $a_j$.  We consider all unicorn arcs from $a$ and $l$ in order and write this as $\{a=a_0, a_1, a_2,...\}=\{a_n\}=P(a,l)$. We call this {\em the infinite unicorn path defined by $a$ and $l$}. Set $\{x_1,x_2,...\}\subseteq a \cap l$ to be the set of intersection points corresponding to each unicorn, appearing in order along $a$. We write $a_i=a^{\circ}_i\cup l^{\circ}_i$ where $a^{\circ}_i \subseteq a_i$ and $l^{\circ}_i \subseteq l_i$ are rays (i.e. subarcs) and $a^{\circ}_i\cap l^{\circ}_i = x_i$. For each $i$, we will use $a_i$ to denote both the arc consisting of the subarcs $a_i^{\circ}$ and $l_i^{\circ}$	as well as its isotopy class, and its geodesic representative, with context clarifying the meaning. When necessary, we will use different notation.

\begin{proposition} For any arc $a$, $L_0 \in \EL0S$, and $l$ asymptotic to $L_0$, $P(a,l)$ contains infinitely many arcs.
\end{proposition}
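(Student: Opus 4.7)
The plan is to prove the proposition in two steps: (i) show that $a \cap l$ is infinite with the $l$-parameters going to $+\infty$, and (ii) show that infinitely many of these intersections yield unicorn arcs.

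For (i), the chosen puncture lies in $Y_{L_0}$, and by the discussion following Proposition \ref{crown} it is contained in a unique complementary region $C$ of $L_0$ in $S$ which is a punctured crown. Since $a$ is essential in $S$, it cannot stay inside $C$ (every simple arc in $C$ from the puncture to itself is peripheral), so $a$ exits $C$ through some leaf of $L_0$, yielding a transverse crossing $p \in a \cap L_0$. By the minimality of $L_0$ and asymptoticity of $l$, the $\omega$--limit set of $l$ equals $L_0$, so $l$ enters every neighborhood of $p$ infinitely often. In each sufficiently small neighborhood of $p$, $l$ tracks closely the leaf of $L_0$ through $p$ (which crosses $a$ transversely at $p$), and so itself crosses $a$ transversely. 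This produces infinitely many intersections $x_n = a(s_n) = l(t_n)$ with $t_n \to +\infty$.

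For (ii), enumerate all intersections of $a \cap l$ in order of increasing $l$-parameter, so that $(s_n, t_n)$ has $t_n$ strictly increasing. A direct inspection of the embedding condition for the candidate arc $a|_{[s_n, T]} \cup l|_{(-\infty, t_n]}$ shows that $x_n$ yields a unicorn arc if and only if no earlier intersection has $s$-coordinate at least $s_n$; equivalently, $s_n$ is a strict record-high in the sequence $\{s_n\}$. So the number of unicorn arcs equals the number of strict records. Setting $s^{**} := \sup\{s \in [0, T] : a(s) \in L_0\}$, I claim $\sup_n s_n = s^{**}$ and that this value is not attained by any $s_n$ (the latter being immediate since $l \cap L_0 = \varnothing$). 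Granting this, the running maxima $M_n := \max(s_1, \ldots, s_n)$ form a nondecreasing bounded sequence with limit $s^{**}$ but never equal to it, forcing infinitely many strict jumps and hence infinitely many records.

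The main obstacle is verifying $\sup_n s_n = s^{**}$. The lower bound $\sup_n s_n \geq s^{**}$ follows by applying (i) at the point $a(s^{**}) \in L_0$. The upper bound, that $l$ does not cross the end-segment $a|_{(s^{**}, T]}$, calls for a universal cover argument. Since $l \cap L_0 = \varnothing$ and $\partial C \subseteq L_0$, the geodesic $l$ cannot cross $\partial C$, so $l \subseteq C$; any crossing of $l$ with $a|_{(s^{**}, T]}$ would then occur inside $C$. Lift to $\HH^2$ and place the cusp of a chosen lift $\widetilde C$ of $C$ at $\infty$ in upper half-plane coordinates. Any lift of $l$ and any lift of $a|_{(s^{**}, T]}$ in $\widetilde C$ have an endpoint at the cusp $\infty$ (both $l$ and $a|_{(s^{**}, T]}$ end at the puncture), and so appear as vertical lines, which do not intersect for distinct $x$-coordinates. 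Hence no such crossings exist, and a symmetric argument at the other end of $a$ confirms the required confinement.
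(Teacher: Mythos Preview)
Your proof is correct and follows essentially the same strategy as the paper's: identify the last crossing $z=a(s^{**})$ of $a$ with $L_0$, show that $a\cap l$ accumulates on $z$ while $l$ never meets $a$ beyond $z$, and deduce infinitely many unicorns. The paper states the ``no intersection past $z$'' fact by appeal to the picture of the punctured ideal polygon and then constructs $a_{i+1}$ inductively as the next time $l$ meets the subarc of $a$ between $x_i$ and $z$; your record-high formulation and your universal-cover argument for the confinement are more explicit versions of exactly these two steps, not a different route.
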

\begin{proof}  The last point of intersection $z$ of $a$ with $L_0$ is at a boundary leaf which is one side of a punctured ideal polygon (since $L_0 \in \EL0S$). Observe that $a$ cannot intersect $l$ after $z$ (compare with Figure \ref{figure1}). However, the points of intersection $a \cap l$ must accumulate on $z$ since $l$ is asymptotic to $L_0$ and any leaf of $L_0$ is dense hence $l$ is dense in $L_0 \cup l$. So, given $a_i \in P(a,l)$ defined by $x_i \in a \cap l$, the next time $l$ intersects the arc of $a$ between $x_i$ and $z$ is the point $x_{i+1}$, and hence $a_{i+1}$ is defined.  Since $i$ was arbitrary, this completes the proof.
\end{proof}

The way we define infinite unicorn paths $P(a,l)$ is also valid for any lamination $L$ and any geodesic $l$ asymptotic to $L$.  However, we cannot guarantee that $P(a,l)$ will contain infinitely many arcs in general.

For the next lemma, recall our convention about our parameterizations of geodesics; see Section \ref{arc}
\begin{lemma} \label{star1}
	Let $a \in \AS$. Given $\epsilon >0$ and $R>0$, there is $N>0$ such that for any $L \in \EL0S$, if $l$ is asymptotic to $L$ and $P(a ,l)=\{a_0, a_1,...\}$, then as parametrized geodesics $a_i(t)$ and $l(t)$, we have $d(a_i(t),l(t))<\epsilon$ for all $t\in (-\infty,R]$ and for all $i\geq N$.
\end{lemma}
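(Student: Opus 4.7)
The plan is to pass to the universal cover $\HH$ and reduce the statement to a boundary estimate. Fix a lift of the chosen puncture to $\infty\in\partial\HH$ and normalize so that the horoball $\{y>1\}$ lifts the cusp $S\setminus S^0$; then $\partial S^0$ lifts locally to the horocycle $y=1$. Lift $l$ to $\tilde l$ with $\tilde l(-\infty)=\infty$: this is the vertical line $x=\lambda_l$ for some $\lambda_l\in\mathbb{R}$, and the paper's parametrization gives $\tilde l(t)=(\lambda_l,e^{-t})$. Since both endpoints of $\bar a_i$ (the geodesic representative of the $i$-th unicorn) lie at the same puncture, we can also lift $\bar a_i$ with its $l$-side endpoint at $\infty$, obtaining $\tilde{\bar a}_i(t)=(\xi_i,e^{-t})$ for some $\xi_i\in\mathbb{R}$.

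The hyperbolic distance between two such vertical lines at common height $y=e^{-t}$ is $2\sinh^{-1}(|\xi_i-\lambda_l|e^{t}/2)$, monotonically increasing in $t$, so to obtain $d(a_i(t),l(t))<\epsilon$ for all $t\in(-\infty,R]$ it suffices to show
\[
|\xi_i-\lambda_l|\;\le\;2\sinh(\epsilon/2)\,e^{-R}
\]
for every $i\ge N$. I would bound $|\xi_i-\lambda_l|$ using the explicit unicorn description in the universal cover: starting at $\infty$, the lifted unicorn path follows $\tilde l$ down to the lift $\tilde x_i=(\lambda_l,e^{-T_i})$ of the corner $x_i\in a\cap l$ (where $T_i$ satisfies $l(T_i)=x_i$), and then continues along the lift of $a$ through $\tilde x_i$ until it ends at $\xi_i\in\partial\HH$. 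A short computation with the semicircle passing through $(\lambda_l,e^{-T_i})$ at angle $\theta_i$ from the vertical, where $\theta_i$ is the intersection angle of $a$ and $l$ at $x_i$, shows that its two endpoints on $\mathbb{R}$ lie at $\lambda_l-e^{-T_i}\cot(\theta_i/2)$ and $\lambda_l+e^{-T_i}\tan(\theta_i/2)$, giving $|\xi_i-\lambda_l|\le 2e^{-T_i}/\sin\theta_i$. As $i\to\infty$, the points $x_i$ accumulate on the last intersection $z$ of $a$ with $L_0$ (established in the previous proposition); since $z\notin l$, this forces $T_i\to\infty$, while $\theta_i$ tends to the nonzero transverse angle between $a$ and the leaf of $L_0$ through $z$. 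So for each fixed $L$ the desired bound holds for all sufficiently large $i$.

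The main obstacle is upgrading this to uniformity in $L$, since a priori neither the rate $T_i\to\infty$ nor the limit angle $\theta_\infty$ admits uniform control across $\EL0S$. I expect to handle this by contradiction and compactness. Suppose uniformity failed; then there would be sequences $L_k\in\EL0S$, $l_k$ asymptotic to $L_k$, indices $i_k\to\infty$, and $t_k\in(-\infty,R]$ at which the distance is at least $\epsilon$. Bi-infinite geodesics from the fixed puncture form a compact family parametrized by their incoming direction, so on a subsequence $l_k\to l_*$ uniformly on compacta; by passing to a further subsequence we may assume the asymptotic laminations converge in $\mathcal{G}(S)$ to some lamination containing the forward-limit of $l_*$. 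Each of the first finitely many intersections $a\cap l_k$ is stable under perturbation of $l_k$, so for every fixed index $j$ the $j$-th unicorn in $P(a,l_k)$ converges to the $j$-th unicorn in $P(a,l_*)$. Combining this with the fixed-$L$ conclusion applied to $l_*$ and a diagonal extraction produces the desired contradiction, yielding a uniform choice of $N$.
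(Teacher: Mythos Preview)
Your upper-half-plane computation is correct and essentially matches what the paper does in the universal cover; the inequality $|\xi_i-\lambda_l|\le 2e^{-T_i}/\sin\theta_i$ isolates exactly the two quantities that matter, the corner time $T_i$ and the intersection angle $\theta_i$.

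The gap is in the uniformity step. Your compactness argument assumes you may apply the ``fixed-$L$ conclusion'' to the limiting ray $l_*$, but nothing forces $l_*$ to be asymptotic to a lamination in $\EL0S$: the Hausdorff limit of the $L_k$ is an arbitrary lamination, and $l_*$ could be an arc (both ends in cusps) or asymptotic to a minimal component whose filled subsurface misses some puncture. In those cases $P(a,l_*)$ is finite, so your claim that ``for every fixed index $j$ the $j$-th unicorn in $P(a,l_k)$ converges to the $j$-th unicorn in $P(a,l_*)$'' is vacuous for large $j$, and there is nothing to feed into a diagonal extraction. Even if $P(a,l_*)$ happened to be infinite, pointwise convergence of the $j$-th unicorn for each fixed $j$ does not control the $i_k$-th unicorn with $i_k\to\infty$; the ``diagonal extraction'' you invoke would require exactly the uniformity you are trying to prove.

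The paper sidesteps this entirely by getting both estimates \emph{directly} from the geometry of $a$. First, an embedded $\epsilon'$-tubular neighborhood of $a\cap S^0$ forces any simple geodesic $l$ to travel at least $2\epsilon'$ between consecutive crossings of $a$ (two crossings inside the simply connected tube would produce a geodesic bigon), so $T_i\ge 2\epsilon'(i-1)$ for every $l$. Second, the angle of intersection with $a$ is bounded below by some $\theta_0>0$ depending only on $a$: a crossing at sufficiently small angle would trap the leaf in the tube, forcing it to coincide with $a$. With both bounds uniform in $L$, your own inequality $|\xi_i-\lambda_l|\le 2e^{-T_i}/\sin\theta_i$ immediately yields a uniform $N$. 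Replacing your compactness paragraph by these two observations would complete the argument along the paper's lines.
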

\begin{proof}
	Since $a\cap S^0$ is a compact arc, there is $\epsilon '>0$, so that the $\epsilon '$--neighborhood of $a$ in $S^0$, $N_{\epsilon'}(a\cap S^0) \subset S^0$, is a tubular neighborhood homeomorphic to $(a\cap S^0)\times [-\epsilon ',\epsilon ']$. Observe that the angle of intersection between $L$ and $a$ has a lower bound $\theta_0$ where $\theta_0$ depends only on $a$. If not, some $L_0\in \EL0S$ has a leaf $l_0$ intersecting $a$ at such a small angle that $l_0\cap S^0 \subseteq N_{\epsilon '}(a\cap S^0)$. Then $l_0=a$, contradicting $L_0\in \EL0S$. 
	
	Now the distance between consecutive points of intersection $l\cap a$ is bounded below by $2\epsilon '$, so if $0<t_1<t_2<...$ are such that $l(t_i)=x_i \in l \cap a$, the intersection point defining $a_i$, then $t_i>(2\epsilon ')(i-1)$. Let $a_i^*$ be the geodesic representative of $a_i = l(-\infty,t_i] \cup a_i^\circ = l_i^\circ \cup a_i^\circ$.  Since the angle of intersection is greater than $\theta_0$, there are lifts $\widetilde a_i^*$ of $a_i^*$ and $\widetilde a_i$ of $a_i$ to the universal cover that have uniformly bounded Hausdorff distance (with the bound depending only on $a$).  In particular, there is a constant $K > 0$ (depending only on $a$) such that $d(\widetilde a_i^*(t),\widetilde a_i(t)) \leq K$, for all $t \in (-\infty,t_i)$.  On the other hand, the lift $\widetilde l$ of $l$ agreeing with $\widetilde a_i$ on its initial segment has $d(\widetilde a_i^*(t),\widetilde l(t)) < e^t \delta$ for $t < 0$, where $\delta$ is the length of $\partial S^0$.  Since $t_i > (2 \epsilon')i$, there is an $N > 0$ so that if $i > N$, we have $t_i >> R$, and hence by convexity of the hyperbolic distance function, we have $d(\widetilde a_i^*(t),\widetilde l(t))< \epsilon$ for all $t \in (-\infty,R]$.  Consequently, $d(a_i^*(t),l(t)) < \epsilon$ for all $t \in (-\infty,R]$.

\end{proof}	
\begin{corollary} \label{accum}
	If $L \in \EL0S$ and $l$ is asymptotic to $L$, then any Hausdorff accumulation point of the sequence $P(a,l)$ contains $l$, and hence $L$.
\end{corollary}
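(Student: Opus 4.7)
My plan is to pass to a subsequence realizing any given Hausdorff accumulation point $L^*$ of $P(a,l)$, and to use Lemma \ref{star1} to show that each point of $l$ lies in $L^*$. The idea is that $a_{n_k}$ Hausdorff--converges to $L^*$ on $S^0$ while at the same time $a_{n_k}(t)$ converges pointwise to $l(t)$ for each fixed $t$, so every value $l(t)$ is forced into the closed set $L^*$.

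Concretely, I would fix an accumulation point $L^* \in \GS$ of $\{a_n\}$, pass to a subsequence $a_{n_k} \xrightarrow{H} L^*$, and fix $t_0 \geq 0$, setting $p = l(t_0) \in S^0$. Given $\epsilon > 0$, I would apply Lemma \ref{star1} with $R = t_0 + 1$ and $\epsilon/2$ in place of $\epsilon$ to produce $N$ such that $d(a_i(t_0),l(t_0)) < \epsilon/2$ for all $i \geq N$. For $k$ large enough that $n_k \geq N$, this gives $d(a_{n_k}(t_0), p) < \epsilon/2$, and $a_{n_k}(t_0) \in S^0$ once $\epsilon$ is small. The Hausdorff convergence (taken in the metric on closed subsets of $S^0$ defining the topology on $\GS$) then yields $q_k \in L^* \cap S^0$ with $d(a_{n_k}(t_0), q_k) < \epsilon/2$, so $d(p,q_k) < \epsilon$. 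Since $L^*$ is closed and $\epsilon$ is arbitrary, $p \in L^*$. Because $l \cap S^0 = l([0,\infty))$ and a geodesic lamination is determined by its intersection with $S^0$, I conclude $l \subseteq L^*$.

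The second assertion, that $L \subseteq L^*$, will follow from the minimality of $L$: since $l$ is asymptotic to $L$, the accumulation set of $l$ in $S$ is a non-empty closed sublamination of $L$, which minimality forces to equal $L$. As $L^*$ is closed and contains $l$, it contains $\overline{l}$, hence contains $L$.

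The main obstacle is really a bookkeeping issue about parametrization: the arc $a_i$ has both ends at the puncture, and to invoke Lemma \ref{star1} one must use the parametrization in which the $(-\infty,0)$ ray of $a_i$ tracks that of $l$ at the puncture, since this is the parametrization built into the lemma. A secondary care point is that $a_{n_k}(t_0)$ should lie in $S^0$ for the Hausdorff estimate to apply; this is automatic for large $k$ because the limiting point $p = l(t_0)$ lies in the interior of $S^0$ whenever $t_0 > 0$, with the case $t_0 = 0$ handled by continuity (take $t_0 \to 0^+$ using that $L^*$ is closed).
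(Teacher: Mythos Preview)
Your proposal is correct and is exactly the argument the paper has in mind: the corollary is stated without proof, as an immediate consequence of Lemma~\ref{star1}, and you have simply unpacked that deduction carefully. Your handling of both parts---pointwise convergence $a_{n_k}(t_0)\to l(t_0)$ forcing $l\subseteq L^*$, and minimality of $L$ giving $L\subseteq\overline{l}\subseteq L^*$---matches the intended reasoning, and the bookkeeping caveats you flag about the parametrization and about $a_{n_k}(t_0)\in S^0$ are genuine but minor technicalities that do not affect the argument.
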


\subsection{Construction of a continuous map}
Here we use infinite unicorn paths to construct a continuous map from $\EL0S$ to $\DAS$.  In the next two lemmas, we assume $a$ is an arc, $L_0 \in \EL0S$ and $l$ is a simple geodesic asymptotic to $L_0$.
  
 \begin{lemma} \label{subpath1}
 	Infinite unicorn paths restrict to finite unicorn paths. More precisely, if $a_j \in P(a,l)$ and $j \geq 3$,  then $P(a,a_j)\subseteq P(a,l)$.
 \end{lemma}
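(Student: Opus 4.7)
The plan is to mirror the proof of the finite subpath lemma (Lemma~\ref{subpath}) in the infinite unicorn setting. The strategy is to identify the intersections of $a$ and $a_j$ in minimal position with the points $x_1, \dots, x_{j-1}$, and then check that the unicorn constructed at each such intersection recovers exactly the arc $a_i$ from $P(a,l)$.

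First, I would analyze $a \cap a_j$ using the decomposition $a_j = a_j^\circ \cup l_j^\circ$. Since $a_j^\circ$ is a subarc of $a$, it can be perturbed off $a$ without creating new intersections, so every transverse intersection of $a_j$ with $a$ (away from the corner $x_j$) comes from $l_j^\circ \cap a$. By construction of the infinite unicorn path, the successive points of $l \cap a$ along $l$ starting from the puncture are $x_1, x_2, \ldots$, so $l_j^\circ \cap a = \{x_1,\dots,x_{j-1},x_j\}$, and hence the transverse intersections of $a_j$ with $a$ (excluding the corner) are exactly $\{x_1,\dots,x_{j-1}\}$.

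Second, I would verify that these $j-1$ candidate intersections all persist when $a_j$ is straightened to its geodesic representative and placed in minimal position with $a$; that is, that no bigon absorbs any of the $x_i$. This is the step where the hypothesis $j\ge 3$ is used: for $j=2$ we are precisely in the exceptional case of Lemma~\ref{subpath}, where $a$ and $a_j$ may be adjacent after straightening. For $j\ge 3$ the bigon analysis from the proof of Lemma~\ref{subpath} in \cite{A} applies verbatim to the piecewise geodesic $a_j^\circ \cup l_j^\circ$ versus $a$, ruling out any collapse.

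Third, I would construct the unicorn arc between $a$ and $a_j$ at each $x_i$, $1 \le i \le j-1$. The segment of $a$ from the chosen endpoint to $x_i$ is by definition $a_i^\circ$. The segment of $a_j$ from the puncture endpoint on the $l_j^\circ$ side to $x_i$ traverses $l_j^\circ$ from the puncture to $x_i$, which is exactly $l_i^\circ$. Hence the unicorn is $a_i^\circ \cup l_i^\circ = a_i$. Finally, the ordering of unicorns is determined in both $P(a,a_j)$ and $P(a,l)$ by decreasing length of the $a$-segment, so the orderings match, giving $P(a,a_j)=\{a_0,a_1,\dots,a_j\}\subseteq P(a,l)$.

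The main obstacle is the bigon check in the second step: one must exclude that two of the candidate intersections $x_i, x_{i+1}$ cobound a bigon with $a$ when $a_j$ is realized geodesically. I expect this to follow by essentially the same argument used by Hensel--Przytycki--Webb in their proof of Lemma~\ref{subpath}, exploiting simplicity of $a$, $l$, and of the embedded concatenation $a_j$, with the hypothesis $j\ge 3$ ensuring we avoid the adjacency exception.
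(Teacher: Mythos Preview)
Your overall strategy---a direct combinatorial identification of $P(a,a_j)$---is different from the paper's. The paper instead uses Lemma~\ref{star1} to choose $m\gg j$ so that the geodesic representative of $a_m$ tracks $l$ past all of $x_1,\ldots,x_j$; then the first $j+1$ terms of the \emph{finite} unicorn path $P(a,a_m)$ are exactly $a_0,\ldots,a_j$, and the already--proved finite subpath lemma (Lemma~\ref{subpath}), applied to $a_0$ and $a_j$ inside $P(a,a_m)$, gives $P(a,a_j)=\{a_0,\ldots,a_j\}\subseteq P(a,l)$. This reduction is essentially a two--line proof precisely because it avoids any fresh bigon or minimal--position analysis; the hypothesis $j\ge 3$ enters only to rule out the exceptional case $j=i+2$ of Lemma~\ref{subpath}.

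Your direct route has a genuine gap already in the first step. The points $x_1,x_2,\ldots$ are \emph{not} all the successive points of $l\cap a$ along $l$: each $x_{i+1}$ is the first crossing of $l$ (after $x_i$) with the subarc $a_i^\circ\subset a$, but between $x_i$ and $x_{i+1}$ along $l$ the geodesic $l$ may cross $a\setminus a_i^\circ$ several times, and these crossings define no unicorn in $P(a,l)$. Hence $l_j^\circ\cap a$ is in general strictly larger than $\{x_1,\ldots,x_j\}$, and the piecewise representative $a_j=a_j^\circ\cup l_j^\circ$ meets $a$ in more than the points $x_1,\ldots,x_{j-1}$. Steps~(2) and~(3) therefore have real work left: you must show that after straightening $a_j$ the extra intersections either disappear in bigons or fail to yield unicorns in $P(a,a_j)$, and that the surviving unicorns are exactly $a_1,\ldots,a_{j-1}$. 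This can presumably be done---it amounts to transplanting the HPW proof of Lemma~\ref{subpath} to the pair $(a,\,a_j^\circ\cup l_j^\circ)$---but it is not ``verbatim,'' and your invocation of $j\ge 3$ at that stage does not by itself rule anything out. The paper's approximation trick sidesteps exactly this computation.
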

 Here $P(a,a_j)$ is a unicorn path as in Section \ref{unicorn}.
 \begin{proof}
 	Let $P(a,l)=\{a_0,a_1,a_2,...\}$, realizing each $a_i$ by the geodesic representative of $l_i^{\circ}\cup a_i^{\circ}$, and let $x_i=l_i^{\circ}\cap a_i^{\circ}$. Assume that $a_j \in P(a,l)$ with $j\geq 3$. By Lemma \ref{star1}, there is $m\gg j$ such that $a_m$ is close to $l$ for all intersection points of $l$ with $a$ up to $x_j$. Then the first $j+1$ points of $P(a,a_m)$ are exactly $a_0,a_1,...a_j$. By Lemma \ref{subpath}, $P(a,a_j) \subseteq P(a,a_m)$, so $P(a,a_j) = \{a_0,a_1,\ldots,a_j\}$. Thus  $P(a,a_j)\subseteq P(a,l)$ as required. 
 \end{proof}

 The next lemma is similar to the proof that the curve graph has infinite diameter given in \cite{Hempel}.
\begin{lemma} \label{infinity} $\displaystyle{\lim_{n \to \infty} d(a,a_n)= \infty}$ where $\{a_n\}=P(a,l)$.
\end{lemma}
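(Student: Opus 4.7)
The plan is to argue by contradiction in the spirit of Hempel, via Hausdorff limits in $\GS$. Suppose there are $D \ge 0$ and a subsequence $\{n_k\}$ with $d_{\AS}(a, a_{n_k}) \le D$. For each $k$ fix a geodesic path $a = b_0^{(k)}, b_1^{(k)}, \ldots, b_D^{(k)} = a_{n_k}$ in $\AS$, padding with $a_{n_k}$ if the geodesic is shorter than $D$. Using compactness of $\GS$ in the Hausdorff metric, a diagonal extraction yields a subsequence along which $b_i^{(k)} \xrightarrow{H} M_i \in \GS$ for every $i$. Then: $M_0 = a$; each $M_i \cup M_{i+1}$ is a geodesic lamination, since $b_i^{(k)}$ and $b_{i+1}^{(k)}$ are disjoint and Hausdorff limits of non-crossing laminations remain non-crossing; and $M_D \supseteq l$, hence $M_D \supseteq L_0$, by Corollary \ref{accum} combined with closedness of $M_D$ and the fact that $l$ accumulates onto the minimal lamination $L_0$.

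The heart of the argument is a downward induction showing $L_0 \subseteq M_i$ for every $i$. Specialised to $i = 0$, this yields $L_0 \subseteq a$, a contradiction since $a$ is a single simple geodesic while $L_0 \in \EL0S$ is a minimal non-closed lamination (it fills $Y_{L_0}$) and so has uncountably many leaves. For the inductive step, granted $L_0 \subseteq M_{i+1}$, the non-crossing of $M_i$ with $M_{i+1}$ immediately gives that $M_i$ does not cross $L_0$ transversely. By Proposition \ref{crown}, the complementary regions of $L_0$ in $Y_{L_0}$ are finite-sided ideal polygons, crowns, or punctured crowns, whose interiors contain no complete simple geodesic beyond finitely many puncture-to-ideal-vertex rays in the punctured cases. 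Thus every leaf of $M_i$ in $\text{int}(Y_{L_0})$ is either a leaf of $L_0$ — in which case minimality of $L_0$ and closedness of $M_i$ give $L_0 \subseteq M_i$ — or one of these isolated puncture-to-ideal-vertex rays. Since each $b_i^{(k)}$ has endpoints at punctures contained in $\text{int}(Y_{L_0})$, the lamination $M_i$ must reach into $\text{int}(Y_{L_0})$.

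The main obstacle is ruling out the degenerate scenario in which $M_i \cap \text{int}(Y_{L_0})$ consists solely of finitely many puncture-to-ideal-vertex leaves (note that $l$ itself is exactly such a leaf of $M_D$, so this possibility is not vacuous) and contains no leaf of $L_0$. I plan to handle this via a parallel induction on $D$: if along some row $\{b_i^{(k)}\}_k$ takes only finitely many distinct values, pigeonhole yields a constant subsequence $b_i^{(k)} \equiv b$, giving $d_{\AS}(b, a_{n_k}) \le D - i$ for infinitely many $k$; this reduces the problem to a strictly shorter path from the new base arc $b$ and allows an inductive hypothesis on $D$ to produce a contradiction. Otherwise, the arcs $b_i^{(k)}$ are all distinct for infinitely many $k$, and a careful geometric analysis — exploiting that a simple arc can approximate a lamination not crossing $L_0$ only by spiraling around $L_0$ — forces $M_i$ to contain a leaf of $L_0$, and then density of any leaf of $L_0$ in $L_0$ gives $L_0 \subseteq M_i$. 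This dichotomy, together with the description of the principal regions from Proposition \ref{crown}, is where I expect the main technical care to be required.
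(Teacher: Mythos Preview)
Your approach is essentially the paper's own argument (a Hempel-style induction via Hausdorff limits along geodesic paths in $\AS$), and the overall structure is correct. However, the ``main obstacle'' you identify is a phantom: the degenerate scenario cannot occur, for exactly the reason you already invoked in the other branch. A puncture-to-ideal-vertex ray $r$ in a punctured crown is, by construction, asymptotic to a boundary leaf of $L_0$; since $L_0$ is minimal, that boundary leaf is dense in $L_0$, so $\bar r \supseteq L_0$. But $M_i \in \GS$ is a Hausdorff limit and hence closed, so if $M_i$ contains $r$ then $M_i \supseteq \bar r \supseteq L_0$. Thus in \emph{both} branches of your dichotomy --- $M_i$ contains a leaf of $L_0$, or $M_i$ contains one of the asymptotic rays --- closedness of $M_i$ together with minimality of $L_0$ forces $L_0 \subseteq M_i$, and the downward induction goes through at once. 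Your own parenthetical about $l$ being such a leaf of $M_D$ already illustrates this: $M_D$ contains the ray $l$, and you correctly concluded $M_D \supseteq L_0$ from closedness; the same reasoning applies to every $M_i$.

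The paper's write-up differs only cosmetically. It first observes, using Proposition~\ref{closed1} and Lemma~\ref{subpath1}, that $d(a,a_n) \leq d(a,a_m) + 6$ for all $m > n$, so that failure of the limit forces $\sup_n d(a,a_n) < \infty$; it then extracts a subsequence at \emph{exactly} distance $N$ rather than padding paths of length $\le D$. The inductive step is dispatched in one line: any leaf of the limit $L^1$ meeting $Y_{L_0}$ is a leaf of $L_0$ or asymptotic to $L_0$, and ``these facts imply that $L^1 \supseteq L_0$''. Your proposed parallel induction on $D$ and the spiraling analysis are therefore unnecessary; once you notice that the asymptotic-ray case is handled by closedness, your proof is complete as written.
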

\begin{proof} 
	 To prove the lemma, suppose for a contradiction that $\displaystyle{\lim_{n \to \infty} d(a,a_n)\neq \infty}$. By Proposition \ref{closed1} and Lemma \ref{subpath1}, $d(a,a_n)\leq d(a,a_m)+6$ for all $m>n$, so $\sup d(a,a_n) < \infty$. Then there is some $N>0$ and an infinite subsequence $\{a_n\}$ with $d(a,a_n )=N$. By Corollary \ref{accum}, we may pass to a further subsequence $\{a_n\}$ so that $a_n \xrightarrow{H}$ $L$ with $L\supseteq L_0 \in \EL0S$. For each $n$, we have $a^1_n$ with $d(a_n,a^1_n )=1$ and $d(a, a^1_n )=N-1$. We may assume that $a^1_n \xrightarrow{H} L^1$ where $L^1$ is a lamination (pass to a subsequence if necessary). Since $d(a_n,a^1_n )=1$, $L\pitchfork L^1 =\varnothing$, and so $L_0\pitchfork L^1=\varnothing$. Since $L_0\subseteq L$ and $L_0$ is minimal and fills $Y_{L_0}$, a subsurface containing all punctures, a leaf of $L^1$ intersects $Y_{L_0}$. Thus the leaf has to be a leaf of $L_0$ or asymptotic to $L_0$. These facts imply that $L^1\supseteq L_0$. Proceeding inductively, for each $k=1,\ldots,N$ we get sequences $\{a_{n}^k\}_{n=1}^{\infty}$ so that $d(a,a_{n}^k)=N-k$, $a_{n}^k \xrightarrow{H} L^k$, and $L^k \supseteq L_0$. But $a_n^N=a$ for all $n$,  a contradiction.      
\end{proof} 

For arcs $a$ and $b$, a geodesic in $\AS$ connecting $a$ and $b$ is denoted by $[a,b]$. The following Proposition tells us that for $L \in \EL0S$, $P(a,l)=\{a_n\}$ defines a point in $\DAS$ which we denote $[P(a,l)]\in \DAS$.
\begin{proposition} \label{point}
	Let $L\in \EL0S$ and $l$ be a geodesic ray asymptotic to $L$. Then $P(a,l)=\{a_n\}$ defines a point in $\DAS$. Moreover, for any two geodesic rays $l$ and $l'$ asymptotic to $L$, we have $[P(a,l)] = [P(a,l')] \in \partial\AS$.	
\end{proposition}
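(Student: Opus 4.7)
The plan is to carry out two steps. First I will show that $\{a_n\}=P(a,l)$ is Cauchy in the Gromov sense, so it determines a point $[P(a,l)]\in\DAS$; then I will show this point depends only on $L$ and not on the chosen ray $l$.

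For the first step, fix $i\leq j$. By Lemma \ref{subpath1} one has $P(a,a_j)=\{a_0,a_1,\ldots,a_j\}$, so $a_i$ is a vertex of this finite unicorn path. Proposition \ref{closed1} then provides $p\in[a,a_j]$ with $d_{\AS}(a_i,p)\leq 6$; since $p$ lies on the geodesic, $d(a,a_j)=d(a,p)+d(p,a_j)\geq(d(a,a_i)-6)+(d(a_i,a_j)-6)$, and rearranging gives $(a_i\cdot a_j)_a\geq d(a,a_i)-6$. Lemma \ref{infinity} forces this to diverge, so $\{a_n\}$ is Cauchy and $[P(a,l)]\in\DAS$ is well-defined.

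For the second step, write $\{a'_n\}=P(a,l')$; the goal is $(a_i\cdot a'_j)_a\to\infty$. I will argue by contradiction: suppose along subsequences $(a_{i_k}\cdot a'_{j_k})_a\leq M$ is bounded. By $\delta$-hyperbolicity of $\AS$ from \cite{A}, the geodesic $[a_{i_k},a'_{j_k}]$ meets the $(M+2\delta)$-neighborhood of $a$, and Proposition \ref{closed1} supplies a unicorn vertex $u_k\in P(a_{i_k},a'_{j_k})$ with $d_{\AS}(u_k,a)\leq C$ uniformly; hence $i(u_k,a)$ is uniformly bounded. Choosing as endpoints the $l$-endpoint of $a_{i_k}$ and the $l'$-endpoint of $a'_{j_k}$, decompose $u_k=\alpha_k\cup\beta_k$ with $\alpha_k\subset a_{i_k}$ and $\beta_k\subset a'_{j_k}$. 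Since $d_{\AS}(u_k,a_{i_k})\geq d_{\AS}(a,a_{i_k})-C\to\infty$ (and similarly for $a'_{j_k}$) by Lemma \ref{infinity}, $u_k$ sits deep in the interior of the unicorn path $P(a_{i_k},a'_{j_k})$, which by the monotone ordering of unicorn segments forces $\alpha_k$ and $\beta_k$ to be long sub-arcs for large $k$. Applying Lemma \ref{star1} to $(a,l)$ and $(a,l')$, for large $k$ the initial portions of $\alpha_k$ and $\beta_k$ $\epsilon$-track $l$ and $l'$ on arbitrarily long parametrized intervals. Since $l\cap a$ and $l'\cap a$ are infinite, accumulating on the last intersection $z$ of $a$ with $L$ (as in the first proposition of Section 3), these long portions carry arbitrarily many transverse intersections with $a$, forcing $i(u_k,a)\to\infty$ and yielding the desired contradiction.

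The main obstacle is twofold: first, linking the $\AS$-distance from $u_k$ to the endpoints $a_{i_k},a'_{j_k}$ with the actual geometric lengths of the sub-arcs $\alpha_k,\beta_k$; second, showing that the transverse intersections with $a$ produced by the tracking persist in the geodesic representative of $u_k$ when isotoped into minimal position with $a$. Lemma \ref{star1} provides tracking only up to hyperbolic distance $\epsilon$ and only for $a_{i_k}$ itself (not for the concatenation $\alpha_k\cup\beta_k$), so I would control cancellations by combining it with the uniform lower bound $\theta_0$ on the angle of intersection between $L$ (and hence $l,l'$) and $a$ established inside the proof of Lemma \ref{star1}, together with a standard innermost-bigon argument limited by the uniform bound on $i(u_k,a)$.
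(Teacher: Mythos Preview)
Your first step is correct and is essentially the paper's argument: use Lemma~\ref{subpath1} to identify $P(a,a_j)$ with an initial segment of $P(a,l)$, then use Proposition~\ref{closed1} and Lemma~\ref{infinity} to see that the Gromov product $(a_i\cdot a_j)_a$ goes to infinity.

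Your second step, however, contains a genuine error and the obstacles you flag are real. The implication ``$d_{\AS}(u_k,a)\leq C$, hence $i(u_k,a)$ is uniformly bounded'' is false: distance in $\AS$ gives an upper bound of the form $d_{\AS}\leq 2\log_2 i+2$, but there is no converse bound---arcs at bounded $\AS$-distance can have arbitrarily large geometric intersection number. So the contradiction you are aiming for never gets off the ground. Even setting this aside, the link you seek between ``$u_k$ is deep in the unicorn ordering'' and ``$\alpha_k,\beta_k$ are geometrically long'' is not available: the unicorn ordering is by containment of sub-arcs, not by arc-length, so being the $m$-th unicorn says nothing quantitative about how far along $l$ or $l'$ the sub-arcs reach. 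Finally, the bigon/cancellation control you propose would have to be carried out for the \emph{geodesic representatives} of $a_{i_k}$ and $a'_{j_k}$ (since unicorns in $P(a_{i_k},a'_{j_k})$ are built from those), and Lemma~\ref{star1} only gives tracking on an initial interval, not on the portion where the unicorn break occurs.

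The paper bypasses all of this with a one-line geometric observation: since $l$ and $l'$ are both rays from the puncture into the same punctured ideal polygon of $S\setminus L$, they are disjoint. Given $a_i=a_i^{\circ}\cup l_i^{\circ}\in P(a,l)$, the first time $l'$ meets $a_i^{\circ}$ (which must happen, since $l'$ is dense in $L\cup l'$ and $a_i^{\circ}$ crosses $L$) produces a unicorn in $P(a,l')$ which is \emph{disjoint} from $a_i$. By symmetry the two infinite paths have Hausdorff distance~$1$ in $\AS$, and combined with the first step this immediately gives $[P(a,l)]=[P(a,l')]$. This direct construction of nearby vertices is both shorter and avoids any appeal to intersection numbers or tracking estimates.
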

\begin{proof}
	 For any $R > 0$, Lemma \ref{infinity} gives $N >0$ such that $d(a,a_n)>R$ for all $n\geq N$. For all $m,n \geq N$, we have $(a_n,a_m)_a \geq d(a,[a_n,a_m])-2\delta$. Since $[a_n,a_m]$ and $P(a_n,a_m)$ have Hausdorff distance at most 12, by Proposition \ref{closed1}, this implies that $(a_n,a_m)_a\geq d(a,P(a_n,a_m))-12-2\delta \geq R-12-2\delta$. For $|m-n|>2$, $P(a_n,a_m)$ is contained in $P(a,l)$ by Lemma \ref{subpath1}, so  $[P(a,l)] \in \DAS$.
	
	 It remains to show the latter part. First note that $l,l'$ are disjoint. Let $a_i \in P(a,l)$. We write $a_i = a_i^{\circ}\cup l^{\circ}_i$. Since $L$ is minimal, $l' \cap a_i^{\circ} \neq \varnothing$. If we parametrize $l'$, the first time $l'$ intersects $a_i^{\circ}$ defines a unicorn arc in $P(a,l')$ disjoint from $a_i$. Similarly, for each point in $P(a,l')$, we can find a point in $P(a,l)$ disjoint from it. Consequently, the Hausdorff distance between $P(a,l)$ and $P(a,l')$ is one which finishes the proof.       
\end{proof}
\begin{proposition} \label{continous}
	Consider the map
	\[ F \colon \EL0S \to \partial \A(S) \]
	defined by $F(L) = [P(a,l)]$ where $l$ is any geodesic asymptotic to $L$. Then $F$ is continuous.
	
\end{proposition}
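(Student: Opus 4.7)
By the corollary following Proposition \ref{seq}, continuity of $F$ reduces to showing that $F(L_n) \to F(L_0)$ in $\partial \AS$ whenever $L_n \xrightarrow{CH} L_0$. Fix such a sequence and a geodesic $l$ from the chosen puncture asymptotic to $L_0$. The strategy is to choose, for each sufficiently large $n$, a geodesic $l_n$ from the puncture asymptotic to $L_n$ so that $l_n|_{(-\infty, R]}$ approximates $l|_{(-\infty, R]}$ for arbitrarily large $R$; then the infinite unicorn paths $P(a, l_n)$ and $P(a, l)$ will share arbitrarily long initial subpaths, which in combination with Lemma \ref{subpath1}, Proposition \ref{closed1}, Lemma \ref{infinity}, and hyperbolicity of $\AS$ will force the Gromov products on $\partial \AS$ to be large.

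For the selection of $l_n$, the puncture lies inside a punctured ideal polygon of $L_n$, and the finitely many geodesic rays from the puncture to boundary leaves of that polygon are exactly the geodesics from the puncture asymptotic to $L_n$. Because $L_n \xrightarrow{CH} L_0$, I expect the boundary leaves of the cusp polygon for $L_n$ to converge on compact subsets of $S^0$ to boundary leaves of the cusp polygon for $L_0$; one can then pick $l_n$ whose unit tangent at $\partial S^0$ converges to that of $l$, and convexity of the hyperbolic distance upgrades this to uniform convergence $l_n(t) \to l(t)$ on each $(-\infty, R]$. Fix $K$, let $\{a_k\} = P(a, l)$, and choose $R$ past the first $K$ intersection points of $a \cap l$. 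For $n$ large, the first $K$ intersection points of $a \cap l_n$ then converge to those of $a \cap l$, so the corresponding unicorn arcs $a_{n,1}, \ldots, a_{n,K}$ are isotopic to $a_1, \ldots, a_K$; thus $a_{n,i} = a_i$ in $\AS$ for $1 \le i \le K$ and $n \ge N(K)$.

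Set $D_K = d(a, a_K)$, which tends to infinity by Lemma \ref{infinity}. For $k > K+2$, Lemma \ref{subpath1} gives $a_K \in P(a, a_k)$, and Proposition \ref{closed1} places $a_K$ within distance $12$ of any geodesic $[a, a_k]$; the triangle inequality then yields $(a_K \cdot a_k)_a \ge D_K - 12$, and likewise $(a_K \cdot a_{n,j})_a \ge D_K - 12$ for $j > K+2$ and $n \ge N(K)$, using $a_{n,K} = a_K$. The standard inequality $(x \cdot y)_a \ge \min((x \cdot z)_a, (z \cdot y)_a) - \delta$ then gives $(a_k \cdot a_{n,j})_a \ge D_K - 12 - \delta$, so $\liminf_{k, j \to \infty}(a_k \cdot a_{n,j})_a \ge D_K - 12 - \delta$. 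Since $K$ may be chosen to make this bound arbitrarily large, $F(L_n) \to F(L_0)$. The only non-routine point in this plan is the ray-selection step: rigorously establishing that the cusp-polygon boundary leaves for $L_n$ converge to those for $L_0$ under coarse Hausdorff convergence requires the structural description of laminations in $\EL0S$ together with Proposition \ref{crown}, and this is where I anticipate needing the most care.
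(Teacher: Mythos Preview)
Your overall strategy coincides with the paper's: reduce continuity to coarse Hausdorff sequences, control the asymptotic rays $l_n$, and use that $P(a,l_n)$ and $P(a,l)$ agree on arbitrarily long initial segments to push Gromov products to infinity. Your Gromov-product computation at the end is more explicit than the paper's (which just invokes thin triangles), but the content is the same.

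The one substantive difference is exactly the step you flag. You try to \emph{select}, for a fixed $l$ asymptotic to $L_0$, rays $l_n$ asymptotic to $L_n$ with $l_n\to l$, and you note this needs the boundary leaves of the cusp polygon for $L_n$ to converge to those for $L_0$. That statement is plausible but not immediate from coarse Hausdorff convergence (a Hausdorff limit of $L_n$ may strictly contain $L_0$, and the cusp polygons need not have the same number of sides), and justifying it directly is awkward. The paper sidesteps this entirely: it takes \emph{arbitrary} $l_k$ asymptotic to $L_k$, observes that every $l_k$ crosses a fixed compact circle about the cusp, and uses compactness to extract a subsequence $l_{k_j}\to l$ as parametrized geodesics. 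It then argues that this $l$ can have no transverse intersection with $L_0$ (any Hausdorff limit of $L_{k_j}$ contains $L_0$) and must enter $Y_{L_0}$, hence is asymptotic to $L_0$. Since there are only finitely many such rays (Figure~\ref{figure1}), the whole sequence $\{l_k\}$ splits into finitely many convergent subsequences, each limiting to some ray asymptotic to $L_0$; Proposition~\ref{point} makes the particular limiting ray irrelevant. This compactness-then-finiteness maneuver replaces your selection step cleanly and closes the gap you were worried about, with no need to analyse how the cusp polygons themselves vary with $n$.
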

\begin{proof}
	Let $\{L_k\}$ be a sequence of laminations in $\EL0S$ and $L_0 \in \EL0S$ such that $L_k \to L_0$. By Proposition \ref{seq}, $L_k \xrightarrow{CH} L_0$. Let $\{l_k\}$ be a sequence of bi-infinite geodesics with $l_k$ asymptotic to $L_k$ for each $k$. Then each $l_k$ intersects a small compact circle of around the cusp, so up to subsequence, $l_{k_j} \to l$ as parametrized geodesics. Since $L_0\in \EL0S$ and $l$ is asymptotic to $L_0$, $\{l_k\}$ is a union of finitely many convergent subsequences (see Figure \ref{figure1}). Any Hausdorff limit of any subsequence of $\{L_k\}$ contains $L_0$, which fills a subsurface $Y_{L_0}$ containing all punctures. Since $l$ must intersect $Y_{L_0}$ and have no transverse intersection with $L_0$, it follows that $l$ is asymptotic to $L_0$. This means $\{l_k\}$ spits into finitely many convergent subsequences. Since $l_{k_j}$ limits to $l$, it follows that $P(a,l_{k_j})$ and $P(a,l)$ agree on longer and longer initial intervals, hence $F(L_{k_j})=[P(a,l_{k_j})]\to [P(a,l)]=F(L_0)$ (this follows from Proposition \ref{closed1}, Lemma \ref{subpath1}, and the fact that every geodesic triangle is thin). This holds for any of the finitely many subsequences $\{L_{k_j}\}$ with $l_{k_j} \to l$ for some $l$ as a parametrized geodesic and hence $F(L_k) \to F(L_0)$.     
\end{proof}

\subsection{Homeomorphism and Theorem \ref{main}}
Now that we have constructed a continuous map $F \colon \EL0S \to \DAS$, we set about proving that it is a homeomorphism.  We begin with the proof of injectivity of $F$.

 \begin{lemma} \label{inject}
 	The map $F \colon \EL0S \to \partial A(S)$ is an injection.
 \end{lemma}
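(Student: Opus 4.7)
\medskip
\noindent\textbf{Proof plan.} Suppose $F(L_0) = F(L_0')$ with $L_0, L_0' \in \EL0S$. Fix geodesics $l, l'$ asymptotic to $L_0, L_0'$ and write $\{a_n\} = P(a,l)$, $\{b_m\} = P(a,l')$. The plan is to use the Morse lemma in the hyperbolic space $\AS$ to pair up nearby arcs along the two unicorn paths, pass to Hausdorff limits to produce a geodesic lamination containing both $L_0$ and $L_0'$ as minimal sublaminations, and then use the defining property of $\EL0S$ to force $L_0 = L_0'$.

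First I would observe that both $\{a_n\}$ and $\{b_m\}$ are unparametrized quasi-geodesic rays based at $a$: Lemma \ref{subpath1} together with Proposition \ref{closed1} show that each finite initial segment is Hausdorff-$12$-close to a genuine geodesic of $\AS$, while Lemma \ref{infinity} guarantees the rays escape to infinity. Since both represent the same boundary point $F(L_0) = F(L_0')$, the standard Morse lemma for quasi-geodesic rays in a $\delta$-hyperbolic space yields a constant $C$ and subsequences $n_k, m_k \to \infty$ with $d_\AS(a_{n_k}, b_{m_k}) \leq C$. For each $k$ I would then connect $a_{n_k}$ to $b_{m_k}$ by a path $a_{n_k} = c_k^0, c_k^1, \ldots, c_k^C = b_{m_k}$ with consecutive arcs disjoint, and using compactness of $\GS$ pass to a diagonal subsequence along which $c_k^i \xrightarrow{H} \Lambda^i$ for every $0 \leq i \leq C$.

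Corollary \ref{accum} applied to the subsequences of $P(a,l)$ and $P(a,l')$ gives $\Lambda^0 \supseteq L_0$ and $\Lambda^C \supseteq L_0'$, while the disjointness of consecutive arcs $c_k^i, c_k^{i+1}$ passes to the limit as the absence of transverse intersections between $\Lambda^i$ and $\Lambda^{i+1}$. Hence $\Lambda := \Lambda^0 \cup \cdots \cup \Lambda^C$ is a geodesic lamination containing both $L_0$ and $L_0'$ as minimal sublaminations. Since distinct minimal sublaminations of a common lamination are disjoint, either $L_0 = L_0'$, as desired, or $L_0 \cap L_0' = \varnothing$.

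The main obstacle, which I expect to be the hardest step, is ruling out the disjoint case. For this I would fix a puncture $p$ and compare the punctured ideal polygon $P_0 \subseteq Y_{L_0}\setminus L_0$ containing $p$ with the corresponding polygon $P_0' \subseteq Y_{L_0'} \setminus L_0'$ (both provided by Proposition \ref{crown} and the $\EL0S$ convention). Because leaves of $L_0'$ asymptote to $p$ and cannot cross $L_0$, the finitely many boundary leaves of $P_0'$ must sit inside $P_0$. If one such leaf were asymptotic to a boundary leaf $\ell \subseteq L_0$ of $P_0$, then by density of leaves in the minimal lamination $L_0'$, the leaf $\ell$ would have to belong to $L_0'$, contradicting $L_0 \cap L_0' = \varnothing$; otherwise every boundary leaf of $P_0'$ stays inside $\overline{P_0}$ as a simple arc running between punctures, and the finitely many isotopy classes of such arcs combined with the minimality of $L_0'$ and its filling hypothesis will again force a leaf shared with $L_0$. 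Either way we reach a contradiction, so $L_0 = L_0'$.
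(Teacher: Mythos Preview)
Your approach differs from the paper's. The paper argues the contrapositive directly: given $L_1\neq L_2$ in $\EL0S$, the asymptotic geodesics $l_1,l_2$ intersect infinitely often, and the first such intersection produces a fixed arc $b$ built from initial segments of $l_1$ and $l_2$. Lemma~\ref{star1} then guarantees that $b$ lies on the unicorn path $P(a^1_m,a^2_n)$ for all large $m,n$, so every geodesic $[a^1_m,a^2_n]$ passes within distance $6$ of $b$, uniformly bounding the Gromov product $(a^1_m\cdot a^2_n)_a$. No Hausdorff limits are taken. Your limiting strategy, modelled on the proof of Lemma~\ref{infinity}, can also be made to work, but as written the final step has a genuine gap.

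The problematic assertion is that ``leaves of $L_0'$ asymptote to $p$.'' They do not: $L_0'\in\EL0S$ is minimal with no isolated leaves, and no leaf has an end in a cusp. Thus your stated reason for the boundary leaves of $P_0'$ to sit inside $P_0$ is empty, and the alternative you describe (boundary leaves as ``arcs running between punctures'') cannot occur. A correct argument is by symmetry: $L_0$ is connected and disjoint from $L_0'$, hence lies in a single component of $S\setminus L_0'$; that component cannot be the punctured ideal polygon $P_0'$ (a minimal, non--closed-curve lamination does not fit inside such a region), so $L_0\cap P_0'=\varnothing$ and therefore $P_0'\subseteq P_0$. From there your observation that a boundary leaf of $P_0'$ must then be asymptotic to a boundary leaf of $P_0$, forcing a point of $L_0\cap L_0'$, does finish the job. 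There is also a smaller gap earlier: knowing only that \emph{consecutive} $\Lambda^i,\Lambda^{i+1}$ have no transverse intersection does not make $\Lambda^0\cup\cdots\cup\Lambda^C$ a lamination. Instead propagate $L_0\subseteq\Lambda^i$ inductively exactly as in Lemma~\ref{infinity}: each $\Lambda^{i+1}$ contains a leaf with an end at a puncture (being a Hausdorff limit of arcs), hence enters $Y_{L_0}$, and having no transverse intersection with $L_0$ must contain $L_0$; at the last step $L_0,L_0'\subseteq\Lambda^C$.
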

 
 \begin{proof}
  Let $L_1\neq L_2$ in $\EL0S$. Set $l_1$ and $l_2$ to be bi--infinite geodesics asymptotic to $L_1$ and $L_2$, respectively. Then we have $|l_1 \cap l_2 |=\infty$. Parametrize $l_1$ and $l_2$ (recall our convention on parametrization of geodesic) and let $t$ be the smallest real number such that $l_1([-\infty,t]) \cap l_2([-\infty,t]) \neq \varnothing$.  Let b be the arc defined by segments of $l_1$ and $l_2$ up to a point in $l_1([-\infty,t]) \cap l_2([-\infty,t])$ (if there are two such points, pick one). Let $P(a,l_i)=\{a^i_j\}^{\infty}_{j=1}$, $i=1,2.$ By Lemma \ref{star1}, $a^i_j$ stays close to $l_i$  for a very long time, for each $i=1,2.$ In particular, it follows that for all sufficiently large $n$ and $m$, $b$ is in $P(a^1_m,a^2_n)$. Therefore, the geodesic from $a^1_m$ to $a^2_n$ passes within distance $6$ of $b$ for all $n$ and $m$ that are sufficiently large. Hence $(a_m^1,a_n^2)_a \leq d(a,b) + 6 + 2 \delta$, so that $[P(a,l_1)] \neq [P(a,l_2)]$.
 	
 \end{proof}	
 
 To show that F is also surjective, we need the following lemma.
 
\begin{lemma} \label{ont}
	If $[\{c_n\}] \in \DAS$, then $c_n\xrightarrow{CH} L_0$ and $L_0\in \EL0S$ with $F(L_0)=[\{c_n\}]$.
\end{lemma}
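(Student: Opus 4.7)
The plan is to extract a Hausdorff subsequential limit $L \in \GS$ of $\{c_n\}$, locate a minimal sublamination $L_0 \in \EL0S$ inside $L$, construct a geodesic ray $l$ asymptotic to $L_0$ as a parametrized subsequential limit of the $c_n$, and then verify $F(L_0) = [P(a,l)] = [\{c_n\}]$ via unicorn-path comparisons. Coarse Hausdorff convergence will then follow from injectivity of $F$. To begin, the boundary hypothesis $\liminf_{i,j}(c_i \cdot c_j)_a = \infty$ combined with the elementary bound $(c_i \cdot c_j)_a \leq \min(d(a,c_i),d(a,c_j))$ immediately yields $d(a,c_n) \to \infty$.

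By compactness of $\GS$, I pass to a subsequence so that $c_n \xrightarrow{H} L$ for some $L \in \GS$, and I claim $L$ contains a minimal sublamination in $\EL0S$. This claim is the main obstacle. My approach is the topological statement that if no minimal sublamination of $L$ lies in $\EL0S$, then some essential arc $\alpha$ is disjoint from $L$: using Proposition \ref{crown} to describe the components of $Y_{L_0^i}\setminus L_0^i$ for each minimal sublamination $L_0^i$ of $L$, I would piece together an essential arc through these complementary regions together with segments along the isolated leaves of $L$, exploiting that some puncture lies outside every $Y_{L_0^i}$. Granting this, for $n$ sufficiently large $c_n$ sits in a thin enough neighborhood of $L$ that $\alpha \cap c_n = \varnothing$, whence $d(a,c_n) \leq d(a,\alpha) + 1$, contradicting $d(a,c_n) \to \infty$.

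Fix such an $L_0 \in \EL0S$ inside $L$. By pigeonhole on the finitely many punctures, pass to a further subsequence so that each $c_n$ has an endpoint at a common puncture $p$, parametrized as in Section \ref{arc}. Arzelà-Ascoli then yields a further subsequence with $c_n \to l$ as parametrized geodesics, where $l$ is a simple geodesic ray from $p$. Since $c_n \cap S^0 \subseteq N_{\epsilon_n}(L\cap S^0)$ with $\epsilon_n \to 0$, the ray $l[0,\infty)$ lies in $L$; the possibility that $l$ is an arc is ruled out because only finitely many arcs in $\AS$ have bounded $S^0$-length, while $d(a,c_n) \to \infty$. Thus $l$ is a subray of a leaf of $L$ emanating from $p$, and since $p \in Y_{L_0}$ while $l$ cannot cross the bounding leaves of $L_0 \subseteq L$ of the punctured-ideal-polygon component of $Y_{L_0}\setminus L_0$ at $p$, the ray $l$ asymptotes to $L_0$. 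Hence $F(L_0) = [P(a,l)]$ by definition.

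To verify $[P(a,l)] = [\{c_n\}]$, set $\{a_m\} = P(a,l)$. For fixed $m$, Lemma \ref{star1} and the parametrized convergence $c_n \to l$ imply that, for $n$ sufficiently large, the intersection point of $a$ with $l$ defining $a_m$ persists as an intersection of $a$ with $c_n$ (up to isotopy), so the $m$-th unicorn arc in $P(a,c_n)$ agrees with $a_m$ in $\AS$; in particular $a_m \in P(a,c_n)$. By Proposition \ref{closed1}, $a_m$ lies within distance $6$ of any geodesic $[a,c_n]$, which yields $(a_m \cdot c_n)_a \geq d(a,a_m) - 6$; combining with $d(a,a_m) \to \infty$ from Lemma \ref{infinity} gives $\liminf_{m,n}(a_m \cdot c_n)_a = \infty$, so $[P(a,l)] = [\{c_n\}]$. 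Finally, for coarse Hausdorff convergence, any Hausdorff accumulation point $L'$ of the original $\{c_n\}$ similarly contains a minimal sublamination $L_0' \in \EL0S$ with $F(L_0') = [\{c_n\}] = F(L_0)$; Lemma \ref{inject} forces $L_0' = L_0$, so $L' \supseteq L_0$ and $c_n \xrightarrow{CH} L_0$.
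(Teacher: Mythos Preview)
Your overall outline---extract a Hausdorff limit, locate a minimal sublamination in $\EL0S$, realize a parametrized limit $l$ asymptotic to it, and compare $P(a,l)$ with $P(a,c_n)$---matches the paper's, and the last two steps are essentially correct. The genuine gap is in the second step, where you argue that if no minimal sublamination of $L$ lies in $\EL0S$ then some essential arc $\alpha$ is \emph{disjoint} from $L$.

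That topological claim is not justified, and your sketch (``exploiting that some puncture lies outside every $Y_{L_0^i}$'') conflates quantifiers: the hypothesis only says that for each $i$ \emph{some} puncture lies outside $Y_{L_0^i}$, not that a single puncture lies outside all of them. If every puncture $p$ sits inside some $Y_{L_0^i}$, then $p$ is in a punctured crown of $Y_{L_0^i}\setminus L_0^i$, and any essential arc from $p$ must cross $L_0^i$; so no essential arc is disjoint from $L$ and your thin-neighborhood argument cannot start. Even when a puncture does lie outside all the $Y_{L_0^i}$, the isolated leaves of $L$ (which must be present, since $L$ is a Hausdorff limit of arcs and hence has leaves running out the cusps) may obstruct any candidate $\alpha$ from having positive distance to $L$, which is what you actually need to conclude $\alpha\cap c_n=\varnothing$.

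The paper avoids this obstacle by a different mechanism. It first passes to the parametrized limit $l$ and singles out the \emph{specific} minimal component $L_0$ that $l$ is asymptotic to. If $Y_{L_0}$ misses a puncture, one takes an arc $a$ outside $Y_{L_0}$; this $a$ need not be disjoint from $L$, but it satisfies $|a\cap l|<\infty$ since $l$ eventually stays in $Y_{L_0}$. The contradiction then comes not from $d(a,c_n)$ but from the Gromov product: because each $c_n$ must re-enter a cusp after entering $Y_{L_0}$, it crosses $l$ there, and choosing $m_n\gg n$ so that $c_{m_n}$ tracks $l$ past that crossing produces a unicorn arc $b_n\in P(c_n,c_{m_n})$ with $|a\cap b_n|\le 2|a\cap l|$. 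This uniformly bounds $d(a,P(c_n,c_{m_n}))$, hence $(c_n\cdot c_{m_n})_a$, contradicting $[\{c_n\}]\in\DAS$. Your ``disjoint arc plus Hausdorff closeness'' shortcut does not see this structure; the paper's argument genuinely uses the unicorn combinatorics between two late terms of the sequence, and you will need something of that strength to close the gap.
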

\begin{proof}
	Let  $\{c_n \}$ be a sequence in $\AS$ that defines a point in $\DAS$. Suppose $\{c_n\}$ is any subsequence Hausdorff converging to a lamination $L$. We may assume $c_n \to l$ as parameterized geodesics, where $l\subseteq L$. Let $L'$ be the derived lamination of $L$. If there is a component $L_1\subseteq L'$ filling a subsurface $Y_{L_1}$ containing all the punctures, then $l$ is asymptotic to $L_1$ since $l$ has one end at a puncture. Suppose there is no such component of $L'$. The geodesic $l$ is asymptotic to some component $L_0\subseteq L'$ filling a subsurface $Y_{L_0}$, and by assumption $Y_{L_0}$ cannot contain all the punctures.

	By assumption, there exists an arc $a$ outside $Y_{L_0}$ such that $|a\cap l | < \infty$.  Indeed, there is an initial subarc $l_0 \subseteq l$ so that $l \setminus l_0 \subseteq Y_{L_0}$, and hence $a \cap l = a \cap l_0$.  Since $c_n \to l$, there is an $N  > 0$ so that for all $n \geq N$, $c_n$ has an initial arc $c_n^0$ so that $c_n^0$ is isotopic to $l_0$ in $S \setminus Y_{L_0}$.  Hence, for all $n\geq N$, $P(a,c_n)\supseteq P(a,l)$.  For each $n\geq N$, the arc $c_n$ returns to the cusp after entering $Y_{L_0}$, so must intersect $l$ at some point, necessarily in $Y_{L_0}$ before leaving $Y_{L_0}$. Thus, there is $m_n>n$ so that $c_{m_n}$ follows $l$ closely until this point of intersection, and hence $P(c_n,c_{m_n})$ contains an arc $b_n$ built from subarcs of $c_n$ and $c_{m_n}$ whose respective intersections with $S \setminus Y_{L_0}$ are precisely $c_n^0$ and $c_{m_n}^0$.  Therefore, $|a \cap b_n| \leq 2 |a \cap l |$.  This gives a uniform distance from $P(c_n, c_{m_n})$ to $a$ for all $n>N$. Therefore from Section 2.1 and Proposition \ref{closed1}, we have
	\[ (c_n,c_{m_n})_a \leq d(a,[c_n,c_{m_n}]) + 2 \delta  \leq d(a,P(c_n,c_{m_n})) + 2 \delta + 12  \]
	which contradicts the fact $[\{c_n\}] \in \DAS$. Hence, $L_0 \in \EL0S$. Fix any arc $a$.  In any subsequence as above with $n$ sufficiently large, $c_n$ and $l$ are very close on long initial segments.  Consequently, $P(a,l)$ and $P(a,c_n)$ are agree on long initial intervals.  It follows that $c_n \to [P(a,l)] = F(L_0)$, hence $[\{c_n\}]=F(L_0)$. Since we passed to an arbitrary Hausdorff convergent subsequence and $F$ is injective, we have $c_n \xrightarrow{CH} L_0$.
	
\end{proof}

By Lemma \ref{inject} and \ref{ont}, we immediately have the next proposition.

\begin{proposition} \label{bijec}
	The map $F \colon \EL0S \to \partial A(S)$ is a bijection.
\end{proposition}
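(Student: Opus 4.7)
The plan is to observe that Proposition \ref{bijec} follows immediately by combining the two preceding lemmas, so essentially no new work is required; my role is just to assemble the two halves of the statement.

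First I would dispatch injectivity. This is precisely the content of Lemma \ref{inject}, so there is nothing to add: given $L_1 \neq L_2$ in $\EL0S$, that lemma already produces $F(L_1) \neq F(L_2)$ in $\partial\A(S)$.

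Second I would address surjectivity using Lemma \ref{ont}. Given an arbitrary boundary point $[\{c_n\}] \in \DAS$, Lemma \ref{ont} hands us a peripherally ending lamination $L_0 \in \EL0S$ (in fact, one toward which $c_n$ coarse Hausdorff converges) with the property that $F(L_0) = [\{c_n\}]$. So every boundary point lies in the image of $F$, which is exactly surjectivity.

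Combining these two statements gives the bijection claim. There is no real obstacle here, since Lemma \ref{ont} already does all the heavy lifting: the only work left is the bookkeeping observation that ``injective plus surjective equals bijective.'' The actual difficulties of the argument — identifying the candidate limit lamination in $\EL0S$ from only the sequence data, and ruling out the possibility that $\{c_n\}$ accumulates on a lamination whose minimal filling component fails to contain every puncture — were confronted in Lemma \ref{ont}, and the Gromov product obstruction there (estimated via Proposition \ref{closed1} and the uniform bound on $|a \cap b_n|$) is what makes the proof go through. So this proposition itself is a one-line corollary of the preceding two results.
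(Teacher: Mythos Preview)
Your proposal is correct and matches the paper's own proof exactly: the paper states that Proposition \ref{bijec} follows immediately from Lemma \ref{inject} and Lemma \ref{ont}, which is precisely the injectivity-plus-surjectivity assembly you describe.
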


 	Next we show that $F^{-1}$ is continuous.
	
\begin{lemma} \label{onto}
	Let $\{L_n\}^{\infty}_{n=1}$ be a sequence in $\EL0S$. If $F(L_n) \rightarrow F(L_0)$ in $\partial\AS$, then $L_n \xrightarrow{CH} L_0$.
\end{lemma}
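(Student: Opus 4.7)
The plan is to argue by contradiction: suppose $F(L_n) \to F(L_0)$ in $\DAS$ but $L_n \not\xrightarrow{CH} L_0$. By Proposition \ref{seq}, after passing to a subsequence we may assume $L_n \xrightarrow{H} L$ in $\GS$ with $L_0 \not\subseteq L$, and my goal is to derive a contradiction by producing a sequence of arcs whose Hausdorff limit squeezes $L_0$ into $L$.

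For each $n$, I would choose a parametrized bi-infinite geodesic $l_n$ asymptotic to $L_n$ with one end at the fixed puncture; since the $l_n$ all cross a fixed compact circle around that puncture, a further subsequence gives $l_n \to l$ as parametrized geodesics. Using $F(L_n) \to F(L_0)$ in $\DAS$ together with $d(a, a_j^n) \to \infty$ as $j \to \infty$ for each $P(a,l_n)$ (Lemma \ref{infinity}), a diagonal argument selects $c_n = a_{j_n}^n \in P(a,l_n)$ with $d(a,c_n) \to \infty$ and $c_n \to F(L_0)$ in $\DAS$. Moreover, by taking $j_n$ large enough that $l_n$ has come within $\epsilon_n \to 0$ of $L_n$ well before parameter $t_{j_n}^n$ (using Lemma \ref{star1}), I may arrange that $c_n$ splits into three pieces: a uniformly bounded initial segment fellow-traveling $l$, a long middle segment within $\epsilon_n$ of $L_n$, and a short terminal segment near the puncture close to $a$.

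By Lemma \ref{ont} applied to $\{c_n\}$, combined with injectivity from Lemma \ref{inject}, I obtain $c_n \xrightarrow{CH} L_0$. After passing to a further subsequence so that $c_n \xrightarrow{H} C$, we have $L_0 \subseteq C$, and the three-piece structure of $c_n$ yields $C \subseteq L \cup l \cup a$. Since $L_0$ is a minimal lamination with uncountably many leaves (as $L_0 \in \EL0S$ and $S$ has at least one puncture, so $L_0$ is not a simple closed geodesic), any leaf $\lambda$ of $L_0$ satisfies that $\lambda \setminus (l \cup a)$ is cofinite, hence dense, in $\lambda$; therefore $\lambda \subseteq \overline{\lambda \setminus (l \cup a)} \subseteq L$, and minimality of $L_0$ forces $L_0 = \overline{\lambda} \subseteq L$, contradicting $L_0 \not\subseteq L$.

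The main obstacle is establishing the inclusion $C \subseteq L \cup l \cup a$, which requires controlling the ``middle segment'' of $c_n$ so that its Hausdorff accumulation lies in $L$. The difficulty is that the asymptotic rate at which $l_n$ spirals into $L_n$ may a priori depend on $n$. To resolve this I would pre-choose tolerances $\epsilon_n \to 0$ and the asymptotic thresholds $T_n = T_n(\epsilon_n)$ before picking $j_n$, ensuring $t_{j_n}^n \gg T_n$, and pass to a subsequence where the topological type of $Y_{L_n}$ is constant, so that $L_n \subseteq Y$ with $Y$ fixed and $L \subseteq \overline{Y}$; this forces any limit point of the middle segment to lie in $L$, while the bounded initial portion contributes only pieces of $l$, and the bounded terminal portion only pieces of $a$.
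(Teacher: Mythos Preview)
Your overall architecture matches the paper's: pick a diagonal sequence $c_n \in P(a,l_n)$ with $[\{c_n\}]=F(L_0)$, and feed it through Lemma~\ref{ont}. The divergence is in how you extract $L_0 \subseteq L$ from this, and that is where the gap lies.

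You aim to prove $C \subseteq L \cup l \cup a$ for the Hausdorff limit $C$ of the $c_n$, via a three-piece decomposition. But that decomposition is a description of the \emph{concatenation} representative $l_{j_n}^{n,\circ}\cup a_{j_n}^{n,\circ}$, whereas Hausdorff convergence in $\GS$ (and hence the conclusion $L_0\subseteq C$ coming from Lemma~\ref{ont}) is about the \emph{geodesic} representatives. Lemma~\ref{star1} only controls the geodesic representative on an initial parameter interval $(-\infty,R]$; beyond that you have merely a uniform bound $K$ on the Hausdorff distance in the universal cover between the geodesic and the concatenation. Projected to $S$, this yields at best $C\subseteq N_K(L\cup l\cup a)$, not the exact containment your leaf argument needs. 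Your proposed fix does not address this: pre-choosing thresholds $T_n$ governs how close $l_n$ is to $L_n$, not how the geodesic representative behaves in the transition region; and ``fixing the topological type of $Y_{L_n}$'' does not make $Y_{L_n}$ a fixed subsurface $Y$, so the conclusion $L_n\subseteq Y$ with $Y$ constant does not follow.

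The paper avoids this obstacle entirely. Rather than trying to pin down all of $C$, it uses Lemma~\ref{star1} to show that the diagonal arcs converge to $l$ \emph{as parametrized geodesics}. Then the mechanism inside the proof of Lemma~\ref{ont} (not just its statement) applies: the parametrized limit $l$ of a sequence representing $F(L_0)$ must itself be asymptotic to $L_0$, so $L_0\subseteq \bar l\smallsetminus l$. Since $l$ is also asymptotic to $L$ (because $l_n\to l$ and $L_n\xrightarrow{H}L$ with $l_n\pitchfork L_n=\varnothing$), one gets $\bar l\smallsetminus l\subseteq L$, hence $L_0\subseteq L$ directly---no contradiction framing, no control of the full Hausdorff limit $C$, and no three-piece decomposition needed.
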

\begin{proof} For all $n\geq 0$, set $F(L_n)=[P(a,l_n)]=[\{a_{i,n}\}^{\infty}_{i=0}]$ where $l_n$ is asymptotic to $L_n$. Consider a Hausdorff convergent subsequence $L_n \xrightarrow{H} L$ where $L$ is a lamination. By passing to a further subsequence we may suppose that $l_n \rightarrow l$ where $l$ is asymptotic to $L$. Since $F(L_n) \rightarrow F(L_0)$ in $\DAS$, for any $r>0$, there is $n_r$ such that $a_{j,n_r} \in N_{2\delta + 12}(\{a_{i,0}\}_i)$ for all $j$ with $d(a,a_{j,n_r}) \leq r$ (we are using the fact that subsegments of unicorn paths have Hausdorff distance at most 12 from geodesics connecting their endpoints). For each $r > 0$, pick $i_r> 0$ so that $d(a,a_{i_r,n_r}) = r$, and consequently $a_{i_r,n_r} \in N_{2 \delta + 12}(\{a_{i,0}\})$.  For any $R > 0$ and $\epsilon > 0$, Lemma 3.2 guarantees that for $r$ sufficiently large, $d(a_{i_r,n_r}(t),l_{n_r}(t))< \epsilon$ for all $t \in (-\infty,R]$.  On the other hand, $l_n \to l$ as parameterized geodesics.  Therefore, $a_{i_r,n_r} \to l$ as $r \to \infty$, also as parameterized geodesics.  Since $[\{a_{i_r,n_r}\}]= F(L_0)$, by Lemma \ref{ont} the closure of $l$ contains $L_0$. Since $l$ is asymptotic to $L$, $L\cup l$ is a lamination containing $l$, and since $l\nsubseteq L_0$, $L_0\subseteq \bar{l}\setminus l \subseteq (L\cup l)\setminus l = L$.
\end{proof}

\begin{proof}[Proof of Theorem \ref{main}]
	That $F$ is a homeomorphism follows immediately from Proposition \ref{continous},  \ref{bijec}, and Lemma \ref{onto}. Furthermore, if $\{a_n\} \in \AS$ is a sequence converging to $F(L_0)$, by Lemma \ref{ont}, any Hausdorff accumulation point of $\{a_n\}$ in $\mathcal{G}(S)$ contains $L_0$.
	
	To see that $F$ is $Mod(S)$--equivariant, note that for any $f \in Mod(S)$ and point $[\{c_n\}] = F(L_0)$, the Hausdorff accumulation points of $\{f(c_n)\}$ are precisely the $f$--image of the Hausdorff accumulation points of $\{c_n\}$, and hence all contain $f(L_0) \in \EL0S$.  Thus, by the first part, it follows that $f(F(L_0)) = f([\{c_n\}]) = [\{f(c_n) \}] = F(f(L_0))$, as required
\end{proof}

\section{ARC AND CURVE GRAPH}

In this section, we prove Theorem \ref{main1}. We first use the same technique to prove Theorem \ref{main} when $S$ is a punctured surface. Then we use the result for the punctured surfaces to prove Theorem \ref{main} for the case when $S$ is a closed surface.  Note that $\ELS\subseteq \EL0S$. It follows that some results in Section 2 can be used in this section.
\subsection{Punctured surface} Assume that $S$ is a connected hyperbolic surface of finite area with finitely many punctures. We observe that if $l$ is asymptotic to $L\in \ELS$, $P(a,l)$ represents a point in the Gromov boundary, and this can be used to define a continuous map. The notation $[P(a,l)]$ is still used to distinguish between the path $P(a,l)$ and the point in the boundary. The next two propositions are analogous to Lemma \ref{infinity} and Proposition \ref{point} and \ref{continous}. The proofs are essential identical, so we omit them.
\begin{proposition} \label{point1}
	Let $L\in\ELS$ and $l$ be a simple geodesic asymptotic to $L$. Then $P(a,l)=\{a_n\}$ defines a point in $\DACS$. Moreover, for any two geodesic rays $l,l'$ asymptotic to $L$, we have $[P(a,l)] = [P(a,l')] \in \partial\ACS$.	
\end{proposition}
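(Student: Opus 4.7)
The plan is to mirror the proofs of Lemma~\ref{infinity} and Proposition~\ref{point} essentially verbatim, making two substitutions: Lemma~\ref{closed} (distance bound $7$, Hausdorff bound $14$ in $\ACS$) replaces Proposition~\ref{closed1} (distance bound $6$, Hausdorff bound $12$ in $\AS$), and throughout $L \in \ELS \subseteq \EL0S$ plays the role of $L_0 \in \EL0S$. The unicorn machinery (Lemma~\ref{subpath1}, Lemma~\ref{star1}, Corollary~\ref{accum}) is purely geometric and transfers unchanged to $\ACS$.

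First I would establish the $\ACS$-analog of Lemma~\ref{infinity}: $d_{\ACS}(a, a_n) \to \infty$. As before, Lemma~\ref{closed} combined with Lemma~\ref{subpath1} gives $d_{\ACS}(a, a_n) \le d_{\ACS}(a, a_m) + 7$ for $n < m$ with $|m-n| > 2$, so a contradiction hypothesis forces an infinite subsequence with $d_{\ACS}(a, a_n) = N$ for some fixed $N$. The inductive construction then produces vertices $a_n^k$ with $d_{\ACS}(a, a_n^k) = N - k$ and Hausdorff limits $L^k \supseteq L$, contradicting $a_n^N = a$ at the last step. Given this, $[P(a,l)] \in \DACS$ follows exactly as in Proposition~\ref{point}: for large $m, n$ with $|m-n| > 2$, Lemma~\ref{subpath1} gives $P(a_n, a_m) \subseteq P(a,l)$ and Lemma~\ref{closed} yields
\[ (a_m \cdot a_n)_a \geq d(a, [a_m, a_n]) - 2\delta \geq d(a, P(a_m,a_n)) - 14 - 2\delta, \]
which tends to infinity as required. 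The independence statement also transfers verbatim from Proposition~\ref{point}: using minimality of $L$ and density of its leaves, every subarc $a_i^\circ$ of $a$ is transversely met by $l'$, producing a unicorn arc in $P(a, l')$ disjoint from $a_i \in P(a, l)$; the symmetric pairing gives $\ACS$-Hausdorff distance at most one between the two paths, so $[P(a,l)] = [P(a,l')]$ in $\partial\ACS$.

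The principal new subtlety, and the place where I expect to spend the most care, is justifying $L^k \supseteq L$ in the inductive step, because vertices of $\ACS$ can be simple closed curves as well as arcs. This is exactly where the full strength of $L \in \ELS$ (rather than merely $\EL0S$) is used: since $L$ fills all of $S$, any lamination $L^k$ with $L^k \pitchfork L = \varnothing$ is forced to contain $L$, because leaves of $L^k$ lying in complementary regions of $L$ are simple complete geodesics in ideal or punctured ideal polygons that spiral onto boundary leaves of $L$, so closedness of $L^k$ together with minimality of $L$ forces $L \subseteq L^k$. Moreover, no essential simple closed curve embeds in any complementary region of $L$, so sequences of closed-curve vertices disjoint from approximations of $L$ Hausdorff-accumulate onto $L$ for the same reason. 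This is the only genuinely new point compared to the $\AS$ case.
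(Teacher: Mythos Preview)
Your proposal is correct and matches the paper's approach exactly: the paper simply states that the proofs are ``essentially identical'' to those of Lemma~\ref{infinity} and Proposition~\ref{point} and omits them. Your identification of the one place requiring care---that in $\ACS$ the intermediate vertices $a_n^k$ may be curves, so the step ``a leaf of $L^k$ intersects $Y_{L}$'' must be justified via $Y_L = S$ rather than via arcs meeting punctures---is precisely the detail hidden behind the word ``essentially,'' and your use of the filling hypothesis $L \in \ELS$ to close it is correct.
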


\begin{proposition} \label{continous1}
	Consider the map
	\[ F \colon \ELS \to \partial \ACS \]
	defined by $F(L) = [P(a,l)]$ where $l$ is any geodesic asymptotic to $L$. Then $F$ is continuous.
	
\end{proposition}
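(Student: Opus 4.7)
The plan is to mirror the proof of Proposition \ref{continous} almost verbatim, with the only substantive change being the verification that a limiting geodesic $l$ at the cusp is asymptotic to $L_0$, since in the $\ELS$ setting $L_0$ fills all of $S$ rather than just a subsurface $Y_{L_0}$ containing the punctures.

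First I would take a sequence $L_k \to L_0$ in $\ELS$ and appeal to Proposition \ref{seq} (via the inclusion $\ELS \subseteq \EL0S$) to conclude $L_k \xrightarrow{CH} L_0$. For each $k$ choose a bi-infinite geodesic $l_k$ asymptotic to $L_k$ with the standard parametrization from Section \ref{arc}, so $l_k(0) \in \partial S^0$. Since $\partial S^0$ is compact, after passing to a subsequence $l_{k_j} \to l$ as parametrized geodesics, where $l$ is a bi-infinite geodesic with one end at the chosen puncture.

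The key step is to show that $l$ is asymptotic to $L_0$. Pass to a further subsequence so that $L_{k_j} \xrightarrow{H} L$ for a lamination $L$; since $L_{k_j} \xrightarrow{CH} L_0$, we have $L \supseteq L_0$. Because $l_{k_j} \pitchfork L_{k_j} = \varnothing$ and convergence is in the Hausdorff metric, $l$ has no transverse intersection with $L$, hence none with $L_0$. But $L_0 \in \ELS$ is minimal and fills $S$, so the only bi-infinite simple geodesics that fail to transversely intersect $L_0$ are the leaves of $L_0$ and the geodesics asymptotic to $L_0$; since $l$ exits a cusp, $l$ must be asymptotic to $L_0$. Moreover, because $L_0$ fills $S$, every cusp of $S$ sits in a complementary region that is a punctured ideal polygon, so there are only finitely many directions in which a geodesic can leave the chosen puncture and be asymptotic to $L_0$ (see Figure \ref{figure1}). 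Consequently the full sequence $\{l_k\}$ splits into finitely many subsequences, each converging as parametrized geodesics to such a limit $l$.

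For each of these finitely many subsequences $l_{k_j} \to l$, Lemma \ref{star1} together with $l_{k_j} \to l$ on longer and longer intervals $(-\infty,R]$ forces the infinite unicorn paths $P(a,l_{k_j})$ and $P(a,l)$ to agree on arbitrarily long initial segments. By Lemma \ref{subpath1} these initial segments are finite unicorn paths $P(a,a^{k_j}_N) = P(a,a^0_N)$, and Lemma \ref{closed} bounds the Hausdorff distance between these and any geodesic in $\ACS$ joining their endpoints by $14$. Combined with thin-triangles in the $\delta$-hyperbolic space $\ACS$, this yields $(F(L_{k_j}) \cdot F(L_0))_a \to \infty$, so $F(L_{k_j}) \to F(L_0)$ in $\DACS$. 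Since this holds along every one of the finitely many subsequences, the original sequence satisfies $F(L_k) \to F(L_0)$, proving continuity.

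The main obstacle is the step establishing that the limit geodesic $l$ is asymptotic to $L_0$. In the $\EL0S$ proof this was handled by exploiting that $Y_{L_0}$ contains all punctures, and any geodesic out of a puncture must enter $Y_{L_0}$ and so be trapped by $L_0$. In the present $\ELS$ case the argument is actually cleaner because $L_0$ fills $S$ outright, but one still must rule out $l$ being a leaf distinct from those in $L_0$ and must use the minimal-filling hypothesis to close off the possibility of complementary regions other than punctured ideal polygons at the cusp.
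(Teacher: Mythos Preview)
Your proposal is correct and follows precisely the approach the paper intends: the paper omits the proof of Proposition \ref{continous1}, stating that it is ``essentially identical'' to that of Proposition \ref{continous}, and your argument is exactly that adaptation, with Lemma \ref{closed} replacing Proposition \ref{closed1} for the $\ACS$ setting and the filling hypothesis on $L_0 \in \ELS$ simplifying the verification that the limiting geodesic $l$ is asymptotic to $L_0$.
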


We note here that $F$ is injective (this follows directly from the arguments of Proposition \ref{bijec} combining with Lemma \ref{closed}). The next lemma mimics Lemma \ref{ont}. The proof is slightly different, so we have included  the relevant details.
\begin{lemma} \label{ont1}
	If $[\{b_n\}] \in \DACS$, then $b_n\xrightarrow{CH} L_0$ and $L_0\in \ELS$ with $F(L_0)=[\{b_n\}]$.
\end{lemma}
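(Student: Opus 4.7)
My plan is to mirror the proof of Lemma \ref{ont} with two modifications: the argument must accommodate curves in $\ACS$ (not only arcs), and we must strengthen the conclusion from $L_0\in \EL0S$ to $L_0\in \ELS$. First pass to a subsequence so that $b_n \xrightarrow{H} L$, and then to a further subsequence so that $b_n \to l$ as parameterized geodesics (choosing a basepoint on each $b_n$ that is a curve); since $d_{\ACS}(a,b_n)\to \infty$ from $[\{b_n\}]\in \DACS$, the lengths of the $b_n$ tend to infinity, so $l$ is a bi-infinite simple geodesic contained in $L$. Let $L_0$ denote a minimal component of the derived lamination $L'$ to which $l$ is asymptotic or in which $l$ is contained.

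The crucial step is to show $L_0$ fills $S$. Suppose for contradiction $Y_{L_0}\subsetneq S$. If $Y_{L_0}$ fails to contain some puncture (so $L_0\notin \EL0S$), the argument of Lemma \ref{ont} applies essentially verbatim, with Lemma \ref{closed} replacing Proposition \ref{closed1} and with base vertex an arc $\alpha$ at the missing puncture lying outside $Y_{L_0}$; this bounds $(b_n\cdot b_m)_\alpha$, contradicting $[\{b_n\}]\in \DACS$. If instead $L_0\in \EL0S\setminus \ELS$, then $\partial Y_{L_0}$ has a simple closed curve component $\alpha$ disjoint from $L_0$, which I use as basepoint. By Lemma \ref{star1}, long initial subarcs of $b_n$ and $b_m$ follow $l$ closely and therefore lie in a small neighborhood of $Y_{L_0}$, disjoint from $\alpha$. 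For $m$ sufficiently large compared to $n$, these segments match up, so a unicorn-arc built from them through a near-$L_0$ intersection point lies in $S\setminus \alpha$ and is a vertex of $P(b_n,b_m)$ at $\ACS$-distance at most one from $\alpha$; Lemma \ref{closed} then bounds $d(\alpha,[b_n,b_m])$, giving a uniform bound on $(b_n\cdot b_m)_\alpha$, the desired contradiction.

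Having established $L_0\in \ELS$, the conclusion $F(L_0)=[\{b_n\}]$ follows as at the end of Lemma \ref{ont}: for a fixed arc $a$, Lemma \ref{star1} shows $P(a,b_n)$ and $P(a,l)$ agree on longer and longer initial subpaths, and together with Lemma \ref{closed} and thin triangles this forces $b_n\to [P(a,l)]=F(L_0)$ in $\DACS$; since the Hausdorff-convergent subsequence was arbitrary and $F$ is injective, $b_n\xrightarrow{CH} L_0$. The main obstacle is handling vertices $b_n$ that are curves rather than arcs, since the unicorn construction is defined only between arcs. I would address this by replacing each curve $b_n$ with a nearby arc $b_n'\in \AS$ at $\ACS$-distance at most one, chosen so that it still follows $l$ on a long initial segment (for instance by closing off a long subarc of $b_n$ following $l$ to a puncture through a short path in $Y_{L_0}$); since $(b_n'\cdot b_m')_\alpha$ differs from $(b_n\cdot b_m)_\alpha$ by a uniformly bounded constant, bounds for the former transfer to bounds for the latter.
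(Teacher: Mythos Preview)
Your overall strategy is sound, but the paper's route is considerably simpler and avoids the awkward points in your proposal. The paper's first move is to immediately replace each $b_n$ by an adjacent \emph{arc} $c_n\in\AS$; since $[\{c_n\}]=[\{b_n\}]$, one may work entirely with arcs from then on, and the unicorn machinery applies without modification. This single step eliminates what you identify as ``the main obstacle'' and makes your final paragraph unnecessary.

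The paper also avoids your two-case split. Working in $\ACS$, closed curves are legitimate vertices, and whenever the derived lamination $L'$ fails to be an ending lamination there is an essential simple closed curve $a$ with $|a\cap l|<\infty$ (a boundary component of $Y_{L_1}$ suffices). One then runs the argument of Lemma~\ref{ont} verbatim on the arcs $c_n$ with this curve $a$ as basepoint, invoking Lemma~\ref{closed} in place of Proposition~\ref{closed1}. There is no need to separately treat the ``$Y_{L_0}$ misses a puncture'' case with an arc basepoint.

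Two specific weaknesses in your write-up: your appeal to Lemma~\ref{star1} in Case~2 is misplaced (that lemma controls unicorn arcs in $P(a,l)$, not initial segments of the $b_n$ themselves; the closeness of $b_n$ to $l$ on long initial segments comes from $b_n\to l$ as parametrized geodesics), and your construction of the replacement arcs $b_n'$ is vague and does not obviously yield arcs disjoint from $b_n$. Finally, note that once one has $c_n\xrightarrow{CH}L_0$, the conclusion $b_n\xrightarrow{CH}L_0$ requires one more sentence: since $b_n$ and $c_n$ are disjoint, any Hausdorff limit of $\{b_n\}$ has no transverse intersection with $L_0$, hence contains it.
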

\begin{proof}
	Let  $\{b_n \}$ be a sequence in $\ACS$ that defines a point in $\DACS$ and $\{c_n \}$ be a sequence $\AS$ such that $c_i$ is adjacent to $b_i$ for all $i$. Then $[\{c_n \}]$ is also a point in $\DACS$ with $[\{b_n\}]=[\{c_n\}]$. We may pass to a subsequence to get $c_n \xrightarrow{H}$ L where $L$ is a lamination. We will first show that $L \supseteq L_0 \in \ELS$. Suppose for a contradiction that $L'$, the derived lamination of $L$, is not an ending lamination.
	
	 As parametrized geodesics, $c_n \rightarrow l\subseteq L$ up to subsequence where $l$ is a geodesic asymptotic to $L_1\subseteq L'$. Since $L'$ is not an ending lamination, $Y_{L_1}$ is not $S$ (see Section \ref{lamination} for discussion on the structure of laminations). Then there exists an essential simple closed curve $a$ in $S\setminus Y_{L_1}$ such that $|a \cap l| < \infty$. We can use this $a$ as in the proof of Lemma \ref{ont} and get a contradiction in the same way, hence $L_0=L' \in \ELS$. Similar to the proof of Lemma \ref{ont}, we have $c_n\xrightarrow{CH} L_0$ and $F(L_0) =[P(a,l)]=[\{c_n\}]=[\{b_n\}]$. Since $c_n$ and $b_n$ have no transverse intersection, any Hausdorff limit of $\{b_n\}$ has no transverse intersection with $L_0$, hence contains $L_0$. Therefore $b_n\xrightarrow{CH} L_0$.

\end{proof}

\begin{proof}[Proof of Theorem \ref{main1}]
	 The map $F$ given by Proposition \ref{continous1} is surjective by Lemma \ref{ont1}. Continuity of $F^{-1}$ follows the same basic argument as in Lemma \ref{onto}. Also, $F$ is $Mod(S)$--equivariant since for any $f \in Mod(S)$ and point $[\{b_n\}] = F(L_0)$,  $f(F(L_0)) = f([\{b_n\}]) = [\{f(b_n) \}] = F(f(L_0))$ as in the proof of Theorem \ref{main}.
	 
\end{proof}

\subsection{Closed surface}

In this section, we show that the Gromov boundary of $\CS$ is the space $\ELS$ when $S$ is a closed surface. Consider a hyperbolic metric $m_0$ on $S$. According to \cite{Birman}, the set of simple geodesics on $(S,m_0)$ is nowhere dense. Then we can find a disk neighborhood $D$ around a point $x$ in $S$ which is disjoint from all geodesic laminations on $S$. Next, we use metric interpolation to modify the metric $m_0$ on $S\smallsetminus x$ to a metric $m_1$ which is complete, pinched negatively curved, and so that:\begin{enumerate}
	\item $m_1=m_0$ on $S\smallsetminus D$,
	\item in a neighborhood of $x$ in $D\smallsetminus x$, $m_1$ is hyperbolic.
\end{enumerate} This is an explicit calculation in polar coordinates about $x$. The same calculation in 3--dimensions is attributed to Kerckhoff and appears in the proof Theorem 1.2.1 of \cite{kojima}. 

Now, we realize every simple closed curve on $S$ as an $m_0$--geodesic and note that they are also $m_1$--geodesics on $S \smallsetminus x$. Hausdorff convergence in $\mathcal{G}(S,m_0)$ (that is, using the metric $m_0$) of any sequence of such geodesics is the same as Hausdorff convergence in $\mathcal{G}(S\smallsetminus x,m_1)$. Hence $(\mathcal{G}(S),m_0)$ and $(\mathcal{EL}(S),m_0)$ embed as closed subsets of $(\mathcal{G}(S\smallsetminus x),m_1)$ and $(\mathcal{EL}(S\smallsetminus x),m_1)$, respectively. For the next lemma, let $m_2$ be any complete hyperbolic metric on $S\smallsetminus x$.

\begin{lemma} There is a bi--Lipschitz homeomorphism $f:(S\smallsetminus x,m_1) \to (S\smallsetminus x,m_2)$ isotopic to the identity on $S\smallsetminus x$, which is an isometry on some cusp neighborhood and $f$ lifts to a quasi--isometry $\bar{f}$ of the universal covers.
\end{lemma}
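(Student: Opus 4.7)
The plan is to construct $f$ by matching deep cusp neighborhoods isometrically and extending across the compact complement by a smooth diffeomorphism. The key input is that both $m_1$ and $m_2$ make $x$ into a standard parabolic cusp: $m_2$ by hypothesis, and $m_1$ because conditions (1)--(2) together with completeness force $m_1$ to be hyperbolic in a punctured neighborhood of $x$. So there exist horocyclic cusp neighborhoods $V_1\subset (S\smallsetminus x, m_1)$ and $V_2\subset (S\smallsetminus x, m_2)$, each isometric to the model cusp $\{\mathrm{Im}(z)\ge h\}/\langle z\mapsto z+1\rangle$ for a common large parameter $h$, and I would fix an isometry $\phi\colon V_1\to V_2$.

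Next I would extend $\phi$ across the compact complements $C_i := (S\smallsetminus x)\smallsetminus \mathrm{int}(V_i)$, which are diffeomorphic to $S$ minus an open disk. Standard differential topology provides a diffeomorphism $\psi\colon C_1\to C_2$ with $\psi|_{\partial C_1}=\phi|_{\partial V_1}$, and gluing gives a homeomorphism $f := \phi\cup\psi\colon S\smallsetminus x\to S\smallsetminus x$. On $V_1$ the map $f$ is a $1$-bi-Lipschitz isometry, and on the compact Riemannian piece $C_1$ the smooth map $\psi$ has bounded derivatives in both directions, so $\psi$ is $K$-bi-Lipschitz for some $K\geq 1$. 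Since the two pieces agree along the compact curve $\partial V_1$, the global map $f$ is $K$-bi-Lipschitz.

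For the isotopy-to-identity condition, I would compare $\phi$ with the restriction to $V_1$ of an ambient isotopy $H_t$ of $S\smallsetminus x$ carrying $V_1$ to $V_2$. The difference between $\phi$ and $H_1|_{V_1}$ is a self-diffeomorphism of the annulus $V_1$, which up to isotopy is some Dehn twist power along a curve parallel to the puncture. Such puncture-parallel Dehn twists are trivial in $\mathrm{Mod}(S\smallsetminus x)$ (by the Birman exact sequence, they correspond to point-pushes of the puncture along null-homotopic loops in $S$), so after choosing $\psi$ compatibly with $H_1|_{C_1}$ modulo a collar adjustment absorbing the twist, $f$ lies in the identity mapping class. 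Finally, since $f$ is bi-Lipschitz and isotopic to the identity, it admits an equivariant lift $\bar f\colon \widetilde{S\smallsetminus x}\to\widetilde{S\smallsetminus x}$; because covering projections are local isometries for the lifted metrics, $\bar f$ is $K$-bi-Lipschitz and hence a quasi-isometry. The main obstacle I expect is reconciling the isometric cusp condition with the isotopy-to-identity condition, since the isometry group of the cusp is only a circle and not large enough to carry Dehn twists; the resolution is precisely the triviality of puncture-parallel Dehn twists in $\mathrm{Mod}(S\smallsetminus x)$.
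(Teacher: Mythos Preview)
Your argument is correct and follows the same core strategy as the paper: find isometric sub-cusps, use compactness of the complement to get a global bi-Lipschitz constant, and then note that the covering projection is a local isometry so the lift inherits the bi-Lipschitz bound and is therefore a quasi-isometry. The one tactical difference is the order of operations: the paper begins with the identity map, isotopes it to a diffeomorphism for the two smooth structures, and then further isotopes so that it restricts to an isometry on a small cusp---so isotopy-to-identity is automatic---whereas you build $f$ from the cusp isometry outward and must then separately verify that $f$ is isotopic to the identity, which leads you into the (correct but avoidable) discussion of puncture-parallel Dehn twists.
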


\begin{proof} We first isotope the identity so that it is a diffeomorphism with respect to the smooth structure for $m_1$ and for $m_2$.  Next, note that any two hyperbolic cusps contain possibly smaller cusps which are isometric.  Now after an isotopy, we can assume that the diffeomorphism $f:(S\setminus x,m_1)\to (S\smallsetminus x, m_2)$ is an isometry on some cusp neighborhood.  Since the complement of the cusp is compact, there is a bound on the bi--Lipschitz constant of the derivative, and hence the map is K--bi--Lipschitz for some $K>1$.  So, $f$ increases lengths of curves by at most a factor of $K$ and decreases them by a factor of at worst $1/K$. Since the pull back metric on the universal covers are path metrics so that the universal covering is a local isometry, this means that lengths of paths in the universal cover are distorted by at worst $K$ and $1/K$. This implies that distances are also distorted by at worst $K$ and $1/K$, so the lift of $f$ is a bi--Lipschitz in the universal covering, hence it is a quasi--isometry.
\end{proof}

Since laminations, ending laminations, and Hausdorff convergence can be defined in terms of the circle at infinity of the universal covering, the lemma proves:
\begin{corollary} \label{homeo}
	There is a homeomorphism $f': \mathcal{G}(S\smallsetminus x, m_1) \to \mathcal{G}(S\smallsetminus x, m_2)$ which induces a homeomorphism from $\mathcal{EL}(S\smallsetminus x, m_1)$ to $\mathcal{EL}(S\smallsetminus x, m_2)$. 
\end{corollary}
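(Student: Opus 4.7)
The plan is to pass to the universal covers and exploit the extension of the quasi-isometry $\bar f$ to the Gromov boundary. Write $\widetilde X_i = (\widetilde{S\smallsetminus x},\tilde m_i)$ for $i=1,2$. Both $\widetilde X_i$ are simply connected, complete, and (at worst) pinched negatively curved, so they are Gromov hyperbolic and admit well--defined circles at infinity $\partial_\infty \widetilde X_i$. By the preceding lemma, $\bar f \colon \widetilde X_1 \to \widetilde X_2$ is a $\pi_1(S\smallsetminus x)$--equivariant quasi--isometry. Standard extension of quasi--isometries of Gromov hyperbolic spaces then produces a $\pi_1$--equivariant homeomorphism $\partial\bar f \colon \partial_\infty \widetilde X_1 \to \partial_\infty \widetilde X_2$.

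Next, I would use endpoints at infinity to transport laminations. Every complete simple $\tilde m_i$--geodesic in $\widetilde X_i$ is determined by its unordered pair of distinct endpoints, so every $L \in \mathcal G(S\smallsetminus x, m_i)$ lifts to a closed, $\pi_1$--invariant subset $\widehat L$ of the space of unordered pairs $((\partial_\infty \widetilde X_i)^2\setminus \Delta)/\mathbb Z_2$. Two $m_i$--geodesics are disjoint if and only if the corresponding pairs are unlinked on the circle, so $\widehat L$ consists of pairwise unlinked pairs. I then define $f'(L)$ to be the $m_2$--lamination whose lift is $(\partial\bar f \times \partial\bar f)(\widehat L)$. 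For this to produce a geodesic lamination, one needs $\partial\bar f$ to preserve the linking relation. Since $f$ is isotopic to the identity, it induces the identity on $\pi_1$, and once $\partial_\infty \widetilde X_1$ and $\partial_\infty \widetilde X_2$ are canonically identified with the Gromov boundary of $\pi_1(S\smallsetminus x)$, the map $\partial\bar f$ becomes this canonical identification; in particular it is orientation--preserving on $S^1$, hence preserves cyclic order and unlinking. The map $f'$ is then a bijection with inverse induced by $\bar f^{-1}$.

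For continuity with respect to the Hausdorff topology on geodesic laminations (defined via the compact subsurface $S^0$), I would show that Hausdorff convergence $L_n \to L$ in $\mathcal G(S\smallsetminus x, m_i)$ is equivalent to Fell--topology convergence $\widehat L_n \to \widehat L$ of the corresponding closed subsets of pairs at infinity: a sequence of $\tilde m_i$--geodesics converges uniformly on compacts iff the pairs of endpoints converge in $\partial_\infty \widetilde X_i$. Since $\partial\bar f$ is a homeomorphism and the space of closed subsets of a compact Hausdorff space carries a continuous induced topology, $f'$ and its inverse are continuous. Finally, for the second assertion, minimality (density of every leaf) and filling (no essential simple closed curve in $S\smallsetminus x$ is disjoint from $L$) are topological properties of $L$ as a subset of $S\smallsetminus x$, and since $f$ is a homeomorphism of $S\smallsetminus x$, these properties pass from $L$ to $f'(L)$; hence $f'(\mathcal{EL}(S\smallsetminus x,m_1))=\mathcal{EL}(S\smallsetminus x,m_2)$.

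The main obstacle I anticipate is the equivalence between Hausdorff convergence on the compact part $S^0$ and convergence at infinity: because the metrics $m_1$ and $m_2$ are different and their $S^0$'s differ, care is needed to rule out escape to the cusp and to match the two notions of convergence. The preservation of unlinking by $\partial\bar f$ is also delicate but follows cleanly once one invokes the canonical identification of both boundary circles with $\partial\pi_1(S\smallsetminus x)$ via $\pi_1$--equivariance of $\bar f$ together with the isotopy of $f$ to the identity.
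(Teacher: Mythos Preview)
Your proposal is correct and follows exactly the approach the paper takes: the paper's proof is the single sentence preceding the corollary, namely that laminations, ending laminations, and Hausdorff convergence can all be defined in terms of the circle at infinity of the universal cover, so the quasi--isometry $\bar f$ from the preceding lemma induces the desired homeomorphism. What you have written is a careful unpacking of precisely this observation.
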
  

 We know that the realization by geodesics defines an isometric embedding of $\CS$ into $\mathcal{C}(S\smallsetminus x)$; see \cite{A}.  This also realizes $\partial\CS$ as a subset of $\partial\mathcal{C}(S\smallsetminus x)$.

\begin{lemma} \label{closed2}
The isometric embedding $\CS \to \mathcal{C}(S\smallsetminus x)$ induces an embedding $\partial\CS \to \partial\mathcal{C}(S\smallsetminus x)$ onto a closed subspace.
\end{lemma}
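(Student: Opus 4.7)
My plan splits the lemma into two parts: first that the induced map $i_{*}\colon\partial\CS\to\partial\mathcal{C}(S\smallsetminus x)$ is a topological embedding, and second that its image is closed. The embedding part will be a formal consequence of the isometric embedding hypothesis. The closedness will use the punctured-surface case of Theorem~\ref{main1} (already proved earlier in this section) together with the closedness of $\ELS$ in $\EL(S\smallsetminus x)$ arranged by the metric-interpolation construction and Corollary~\ref{homeo}.

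For the embedding part, fix a basepoint $o\in\CS$. The isometric identity $(c\cdot c')_{o}=(i(c)\cdot i(c'))_{i(o)}$ for all $c,c'\in\CS$ shows that a sequence in $\CS$ is Gromov-Cauchy if and only if its image in $\mathcal{C}(S\smallsetminus x)$ is Gromov-Cauchy, and two sequences are equivalent in $\partial\CS$ if and only if their images are equivalent in $\partial\mathcal{C}(S\smallsetminus x)$. This makes $i_{*}$ well-defined and injective. Since the Gromov boundary topology is determined by the Gromov products, $i_{*}$ is continuous with continuous inverse on its image, hence a topological embedding.

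For closedness, let $F\colon\EL(S\smallsetminus x)\to\partial\mathcal{C}(S\smallsetminus x)$ be the homeomorphism from the punctured case of Theorem~\ref{main1}, composed with the quasi-isometry $\mathcal{C}(S\smallsetminus x)\hookrightarrow\mathcal{AC}(S\smallsetminus x)$. I claim $i_{*}(\partial\CS)=F(\ELS)$, which is closed because $\ELS$ is closed in $\EL(S\smallsetminus x)$. For the inclusion $\subseteq$, any $[\{c_{n}\}]\in\partial\CS$ maps to $F(L_{0})$ for some $L_{0}\in\EL(S\smallsetminus x)$ with $c_{n}\xrightarrow{CH}L_{0}$ by Lemma~\ref{ont1}; since $\GS$ is closed in $\g(S\smallsetminus x)$, the Hausdorff subsequential limits of $\{c_{n}\}$ lie in $\GS$, so $L_{0}\in\GS\cap\EL(S\smallsetminus x)=\ELS$. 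Here the intersection equals $\ELS$ because essential simple closed curves on $S\smallsetminus x$ coincide with those on $S$, the peripheral loops around $x$ being exactly the curves trivial on $S$, so minimality and filling transfer. For the inclusion $\supseteq$, given $L_{0}\in\ELS$, form the unicorn path $\{a_{n}\}=P(a,l)\subseteq\mathcal{A}(S\smallsetminus x)$ for an arc $a$ on $S\smallsetminus x$ and a geodesic ray $l$ asymptotic to $L_{0}$; this represents $F(L_{0})$ in $\partial\mathcal{AC}(S\smallsetminus x)$. For each $n$ pick a simple closed curve $c_{n}$ disjoint from $a_{n}$, necessarily essential on $S\smallsetminus x$ and hence on $S$, so $c_{n}\in\CS$. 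Since $\{c_{n}\}$ lies at distance $1$ from $\{a_{n}\}$ in $\mathcal{AC}(S\smallsetminus x)$ it represents the same boundary point $F(L_{0})$, and the isometric embedding then gives that $\{c_{n}\}$ defines a point of $\partial\CS$ mapping to $F(L_{0})$.

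The main obstacle is the $\supseteq$ inclusion: producing a sequence of curves on $S$ (rather than arcs on $S\smallsetminus x$) that represents $F(L_{0})$. The unicorn-arc-to-adjacent-curve construction handles this, relying on the identification of essentiality on $S\smallsetminus x$ with essentiality on $S$. An alternative, more abstract proof of closedness avoids this step by a diagonal argument based on the $\delta$-hyperbolic 4-point condition, directly showing that any limit of $i_{*}(\partial\CS)$ is represented by a sequence in $\CS$, but this would bypass the $\EL$ identification that the preceding machinery was designed to exploit.
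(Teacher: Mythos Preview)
Your approach differs substantially from the paper's. The paper proves closedness by precisely the diagonal argument you mention only as an ``alternative'': given boundary points $[\{c_{n,k}\}_n]_k$ in $\partial\CS$ converging to $[\{c_n\}]\in\partial\mathcal{C}(S\smallsetminus x)$, one chooses for each $r>0$ indices $k_r,n_r$ with $(c_{n_{k_r},k_r}\cdot c_{n_r})_o>r$, so that $\{c_{n_{k_r},k_r}\}_r$ is a sequence lying in $\CS$ which represents $[\{c_n\}]$. This three-line argument uses nothing about laminations. The paper then feeds Lemma~\ref{closed2} into Lemmas~\ref{invariant} and~\ref{dense} to deduce $F^{-1}(\partial\CS)=\ELS$ afterwards. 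Your route instead tries to establish $i_*(\partial\CS)=F(\ELS)$ directly inside the proof of the lemma; if carried out, this would make Lemmas~\ref{invariant} and~\ref{dense} redundant, which is a real gain, but it costs you the work the paper deliberately postpones.

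There is a genuine gap in your $\supseteq$ inclusion. You write ``pick a simple closed curve $c_n$ disjoint from $a_n$, necessarily essential on $S\smallsetminus x$ and hence on $S$,'' but the implication fails: the peripheral loop about $x$ is essential on $S\smallsetminus x$, can be disjoint from $a_n$, and bounds a disk on $S$. You need to argue that some \emph{specific} curve disjoint from $a_n$ is essential on $S$---for instance a boundary component of a regular neighborhood of $a_n\cup\{x\}$ in $S$, together with a check that not all such components are trivial (using that $S$ has genus at least two and $a_n$ is essential). A second, smaller point: you appeal to $\ELS$ being closed in $\EL(S\smallsetminus x)$, but the closure established just before the lemma is in the Hausdorff topology on $\mathcal{G}$, whereas $F$ is a homeomorphism for the coarse Hausdorff topology; bridging these requires a short extra argument (any CH-limit of laminations in $\GS$ is contained in a Hausdorff limit, hence lies in $\GS$, and filling $S\smallsetminus x$ implies filling $S$).
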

\begin{proof}
	Let $\{[\{c_{n,k}\}_{n=1}^{\infty}]\}_{k=1}^{\infty}$ be a sequence of points in $\partial\CS$ converging to $[\{c_n\}_{n=1}^{\infty}] \in \partial\mathcal{C}(S\smallsetminus x)$. We want to show that $[\{c_n\}_{n=1}^{\infty}] \in  \partial\CS$. For each $r>0$, there are $k_r$ and $n_r$ such that $(c_{n_{k_r},k_r}\cdot c_{n_{r}})_o >r$. Thus $[\{c_{n_{k_r},k_r}\}_{r=1}^{\infty}]=[\{c_n\}_{n=1}^{\infty}]$. Since $c_{n_{k_r},k_r} \in \CS$ for all $r$, $[\{c_n\}_{n=1}^{\infty}] = [\{c_{n_{k_r},k_r}\}_{r=1}^{\infty}] \in \partial\CS$.
\end{proof}

We now identify $\GS$, $\ELS$, and $\partial\mathcal{C}(S)$ with their respective images in $\mathcal{G}(S\smallsetminus x)$, $\mathcal{EL}(S\smallsetminus x)$, and $\partial\mathcal{C}(S\smallsetminus x)$.  Let $F:\mathcal{EL}(S\smallsetminus x)\to \partial\mathcal{C}(S\smallsetminus x)$ be the homeomorphism from Theorem \ref{main1}, already proved in the punctured case. Suppose $[\{c_n\}]$ is a point in $\partial\CS$. This is also a point in $\partial\mathcal{C}(S\smallsetminus x)$, so any Hausdorff accumulation point of $\{c_n\}$ contains $L \in \mathcal{EL}(S\smallsetminus x)$ where $F(L)=[\{c_n\}]$.  On the other hand, any Hausdorff accumulation point of $\{c_n\}$ is in $\GS$ since $\GS$ is closed in $\mathcal{G}(S \smallsetminus x)$. Since $L$ is in $\mathcal{EL}(S\smallsetminus x)$, every leaf of $L$ is dense and all complementary regions are ideal polygons or once--punctured ideal polygons, so in fact $L$ is in $\ELS$. Let $\varOmega= F^{-1}(\partial\CS)$ which is a closed subset of $\ELS$. If $[\{c_n\}] = F(L)$, then any Hausdorff accumulation point of $\{c_n\}$ contains $L$. Let $F'=F|_{\varOmega}:\varOmega\to \partial\CS$ be the restricted homeomorphism. 

\begin{lemma} \label{invariant}
	$\varOmega$ is a $Mod(S)$--invariant subset of $\ELS$ and $F'$ is $Mod(S)$--equivariant.
\end{lemma}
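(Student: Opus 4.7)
\textbf{Proof plan for Lemma \ref{invariant}.}  The plan is to pull information back and forth across the inclusions $\mathcal{EL}(S)\hookrightarrow\mathcal{EL}(S\smallsetminus x)$ and $\partial\mathcal{C}(S)\hookrightarrow\partial\mathcal{C}(S\smallsetminus x)$ (the latter from Lemma \ref{closed2}), using that $Mod(S)$ acts isometrically on $\mathcal{C}(S)$ and preserves $\mathcal{EL}(S)$, while the punctured homeomorphism $F$ is characterized by Hausdorff accumulation (Lemma \ref{ont1}).

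First I would fix $f\in Mod(S)$ and $L\in\varOmega$, and write $F(L)=[\{c_n\}]\in\partial\mathcal{C}(S)\subseteq\partial\mathcal{C}(S\smallsetminus x)$ with each $c_n\in\mathcal{C}(S)$ realized as an $m_0$--geodesic (and hence an $m_1$--geodesic). Since $f$ acts on $\mathcal{C}(S)$ by isometries, $[\{f(c_n)\}]=f([\{c_n\}])$ is a well-defined point of $\partial\mathcal{C}(S)$, and via Lemma \ref{closed2} the same sequence represents a point of $\partial\mathcal{C}(S\smallsetminus x)$. Next I would identify the Hausdorff accumulation points of $\{f(c_n)\}$ in $\mathcal{G}(S\smallsetminus x)$: they are exactly the $f$--images of the Hausdorff accumulation points of $\{c_n\}$, which lie in $\mathcal{G}(S)$ (since $\mathcal{G}(S)$ is closed in $\mathcal{G}(S\smallsetminus x)$) and all contain $L$ by Theorem \ref{main1} applied to $S\smallsetminus x$. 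Thus every such accumulation point contains $f(L)\in\mathcal{EL}(S)\subseteq\mathcal{EL}(S\smallsetminus x)$.

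Now I would invoke Lemma \ref{ont1}: any sequence in $\mathcal{C}(S\smallsetminus x)$ that represents a boundary point and whose Hausdorff accumulation points in $\mathcal{G}(S\smallsetminus x)$ all contain a specific ending lamination must coarse Hausdorff converge to that lamination, and the resulting boundary point is its $F$--image. Since ending laminations are minimal and fill, $f(L)$ is the unique ending sublamination that can appear, so $F(f(L))=[\{f(c_n)\}]=f(F(L))$. In particular $F(f(L))\in\partial\mathcal{C}(S)$, which means $f(L)\in\varOmega$. Applying the same argument to $f^{-1}$ gives $f(\varOmega)=\varOmega$, proving that $\varOmega$ is $Mod(S)$--invariant, and the identity $F'(f(L))=f(F'(L))$ for $L\in\varOmega$ is exactly the desired equivariance of $F'$.

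The main obstacle I anticipate is the subtlety that $f\in Mod(S)$ need not preserve the point $x$, so it does not literally act as an element of $Mod(S\smallsetminus x)$; the argument has to route everything through the invariant $Mod(S)$--action on $\mathcal{C}(S)$ and on $\mathcal{EL}(S)$ and then use the intrinsic, accumulation-based characterization of $F$ supplied by Lemma \ref{ont1}, rather than trying to transport $F$ itself by $f$. Keeping this conceptual separation straight, and citing Lemma \ref{closed2} so that $[\{f(c_n)\}]$ is genuinely a boundary point of $\mathcal{C}(S\smallsetminus x)$ to which Lemma \ref{ont1} applies, is the only delicate point.
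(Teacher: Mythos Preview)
Your proposal is correct and follows essentially the same argument as the paper: both start from $F(L)=[\{c_n\}]$, observe that the Hausdorff accumulation points of $\{f(c_n)\}$ are the $f$--images of those of $\{c_n\}$ and hence contain $f(L)$, and then use the Hausdorff-accumulation characterization of $F$ to conclude $F(f(L))=[\{f(c_n)\}]=f(F(L))\in\partial\mathcal{C}(S)$, giving $f(L)\in\varOmega$ and equivariance. The only cosmetic difference is that the paper phrases the key step via the bijectivity of $F'$ (writing $[\{f(c_n)\}]=F'(L_0)$ for some $L_0\in\varOmega$ and then arguing $L_0=f(L)$ by uniqueness of the ending sublamination), whereas you invoke Lemma~\ref{ont1} directly; these amount to the same reasoning.
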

\begin{proof} To prove the first statement, let $L \in \varOmega$, $f \in Mod(S)$, and  $F'(L)=[\{c_n\}]$. We need to show that $f(L) \in \varOmega$. Any Hausdorff accumulation points of $\{f(c_n)\}$ are precisely the $f$--image of the Hausdorff accumulation points of $\{c_n\}$, and hence all contain $f(L)$. Since $F':\varOmega \to \partial\CS$ is a bijection, $[\{fc_n\}] = F'(L_0)$, for some $L_0 \in \varOmega \subseteq \ELS$ ($L_0$ is the unique ending lamination such that any Hausdorff accumulation point of $\{fc_n\}$ contains $L_0$). Since $f(L)$ is an ending lamination, we must have $f(L)=L_0 \in \varOmega$, as required. 

Since  $f(F'(L)) = f([\{c_n\}]) = [\{f(c_n) \}] =F'(L_0) =F'(f(L))$, $F'$ is $Mod(S)$--equivariant. 
\end{proof}
 
 The proof of the following lemma is essentially the same as Theorem 6.19 of \cite{FLP}, so we omit it here.
 
\begin{lemma} \label{dense}
	For $L \in\ELS$, $\overline{Mod(S)\cdot L}=\ELS$.  
\end{lemma}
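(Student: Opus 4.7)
The plan is to deduce density of the orbit $Mod(S)\cdot L$ in $\ELS$ from the classical minimality of the $Mod(S)$--action on $\PML(S)$, which is the content of Theorem 6.19 of \cite{FLP}. Given an arbitrary target $L'\in \ELS$, I will exhibit a sequence $\{f_n\}\subseteq Mod(S)$ with $f_n\cdot L \xrightarrow{CH} L'$; by Proposition \ref{seq}, this is convergence in the topology of $\ELS$, which is exactly the density statement.

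First, since $L$ and $L'$ are minimal geodesic laminations on a closed hyperbolic surface, each admits a transverse invariant measure of full support. I would fix measured laminations $\mu,\mu'\in \ML(S)$ with $|\mu|=L$ and $|\mu'|=L'$. By minimality of the $Mod(S)$--action on $\PML(S)$, there exist $f_n\in Mod(S)$ with $f_n\cdot [\mu]\to [\mu']$ in $\PML(S)$; lifting through the projection $\ML(S)\setminus\{0\}\to \PML(S)$, I can choose positive scalars $\lambda_n>0$ so that $\lambda_nf_n\mu\to \mu'$ in $\ML(S)$.

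The next step is to pass from measured convergence to coarse Hausdorff convergence of the underlying supports, using upper semicontinuity of the support map $|\cdot|\colon \ML(S)\to \GS$ with respect to the Hausdorff topology: if $\nu_n\to \nu$ in $\ML(S)$ and $|\nu_n|\xrightarrow{H} \Lambda$, then $|\nu|\subseteq \Lambda$. Applied to $\nu_n=\lambda_nf_n\mu$, every Hausdorff accumulation point of the sequence $f_n\cdot L = |\lambda_nf_n\mu|$ must contain $L'=|\mu'|$, which is exactly the defining property of $f_n\cdot L \xrightarrow{CH} L'$. Since $L'\in \ELS$ was arbitrary, this finishes the proof.

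The main obstacle is really assembling the standard Thurston--theoretic inputs: existence of a transverse invariant measure on any minimal filling lamination on a closed surface, minimality of the $Mod(S)$--action on $\PML(S)$, and upper semicontinuity of the support map $|\cdot|$. Each is well--known but needs a careful citation; once they are in place, the density argument of FLP for projective measured foliations ports directly with only cosmetic changes, which is why the author has chosen to omit the details.
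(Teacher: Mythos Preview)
Your argument is correct and matches the paper's intent: the paper omits the proof entirely, pointing to Theorem~6.19 of \cite{FLP}, and your write-up supplies exactly the standard deduction from minimality of the $Mod(S)$--action on $\PML(S)$ via the forgetful/support map and Proposition~\ref{seq}. The only nuance is that the paper's phrasing (``essentially the same as'') suggests rerunning the FLP argument directly in $\ELS$, whereas you use the $\PML$ result as a black box and transfer; these are equivalent in difficulty and content, and the ingredients you flag (existence of a full--support transverse measure on a minimal lamination, upper semicontinuity of $|\cdot|$) are indeed the only things to check.
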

\begin{proof}[Proof of Theorem \ref{main1} (closed case)]
Since $F$ is a homeomorphism, by Lemma \ref{closed2}, $\varOmega\subseteq\mathcal{EL}(S\smallsetminus x)$ is a closed subset, and so is closed in $\ELS$. By Lemma \ref{invariant} and \ref{dense}, $\varOmega = \ELS$. Thus, $F':\ELS\to \partial\CS$ is a homeomorphism which is $Mod(S)$--equivariant by Lemma \ref{invariant}.
\end{proof}
\begin{remark*}
	{\em It seems likely that one could also gives a direct proof in the closed case, using {\em bicorn paths} introduced in \cite{PA}. To avoid developing this theory, we gave this alternative proof.}
\end{remark*}

\bibliographystyle{amsplain}
\bibliography{arcref}

\end{document}